\newtheorem{definition}{Definition}
\newtheorem{lemma}{Lemma}
\newtheorem{theorem}{Theorem}
\newtheorem{proposition}{Proposition}
\newcommand{\st}{\text{s.t.}}
\title{A Hybrid Submodular Optimization Approach to Controlled Islanding with Post-Disturbance Stability Guarantees}
\author{Luyao Niu$^{1*}$, Dinuka Sahanbandu$^{1*}$, Andrew Clark$^2$,~\IEEEmembership{Member,~IEEE,} and Radha Poovendran$^1$,~\IEEEmembership{Fellow,~IEEE}
\thanks{This work was supported by the Air Force Office of Scientific Research (AFOSR) through grants FA9550-23-1-0208, FA9550-22-1-0054, and FA9550-20-1-0074.}%
\thanks{*Authors contributed equally to this work.}
\thanks{$^{1}$Luyao Niu, Dinuka Sahabandu, and Radha Poovendran are with the Network Security Lab, Department of Electrical and Computer Engineering, University of Washington, Seattle, WA 98195-2500
        {\tt\small \{luyaoniu,sdinuka,rp3\}@uw.edu}}%
\thanks{$^{2}$Andrew Clark is with the Department of Electrical and Systems Engineering at Washington University in St. Louis, St Louis, MO 63130-4899
        {\tt\small \{andrewclark\}@wustl.edu}}%
}
\begin{document}

\maketitle

\begin{abstract}
Disturbances may create cascading failures in power systems and lead to widespread blackouts.
Controlled islanding is an effective approach to mitigate cascading failures by partitioning the power system into a set of disjoint islands. 
To retain the stability of the power system following disturbances, the islanding strategy should not only be minimally disruptive, but also guarantee post-disturbance stability. 
In this paper, we study the problem of synthesizing post-disturbance stability-aware controlled islanding strategies.
To ensure post-disturbance stability, our computation of islanding strategies takes load-generation balance and transmission line capacity constraints into consideration, leading to a hybrid optimization problem with both discrete and continuous variables.
To mitigate the computational challenge incurred when solving the hybrid optimization program, we propose the concepts of hybrid submodularity and hybrid matroid.
We show that the islanding problem is equivalent to a hybrid matroid optimization program, whose objective function is hybrid supermodular.
Leveraging the supermodularity property, we develop an efficient local search algorithm and show that the proposed algorithm achieves $\frac{1}{2}$-optimality guarantee.
We compare our approach with a baseline using mixed-integer linear program on IEEE 118-bus, IEEE 300-bus, ActivSg 500-bus, and Polish 2383-bus systems. 
Our results show that our approach outperforms the baseline in terms of the total cost incurred during islanding across all test cases. 
Furthermore, our proposed approach can find an islanding strategy for large-scale test cases such as Polish 2383-bus system, whereas the baseline approach becomes intractable.
\end{abstract}

\begin{IEEEkeywords}
Controlled islanding, hybrid submodular, cascading failure, power system restoration, blackstart allocation
\end{IEEEkeywords}

\section{Introduction}
\IEEEPARstart{P}{ower} systems are often operated close to their stability and capacity limits.
When power systems incur disturbances such as cyber attacks \cite{case2016analysis}, natural disasters \cite{busby2021cascading}, and spiking growth in demand \cite{pidd2012india}, some transmission lines and generators may experience outages, leading to overflow at neighboring transmission lines and hence cascading failures \cite{hines2009cascading}.
Cascading failures can lead to disastrous consequences. 
For example, the 2023 Pakistan blackout \cite{pakistan2023blackout} left approximately $220$ million people without power.


One approach to improve the resilience of power system and mitigate cascading failures is through \emph{controlled islanding} \cite{kamali2020controlled}.
Controlled islanding deliberately trips a subset of transmission lines to partition the power system into a collection of disjoint, internally stable and connected islands.
Currently, metrics such as power flow disruption, generator coherency, and load-generation imbalance are widely adopted when computing controlled islanding strategies \cite{yusof1993slow,you2004slow,xu2009slow,kyriacou2017controlled,patsakis2019strong,teymouri2019milp,trodden2013optimization,trodden2013milp,kamali2020intentional,esmaili2020convex}.

In addition to the aforementioned metrics, the post-disturbance stability needs to be taken into consideration when computing the controlled islanding strategies.
For example, if a controlled islanding strategy cannot guarantee the post-islanding power flow to be within the capacity of each transmission line, controlled islanding may have to be executed again within one newly formulated island to avoid cascading failure inside the island.
In the worst-case, such islanding strategies may have to be executed multiple times, leading to widely-spread blackout.
Furthermore, the controlled islanding strategy should partition the power system in a way such that the tripped connections can be re-established and thus the whole power system can be restored efficiently \cite{sarmadi2011sectionalizing,patsakis2019strong,tortos2012controlled}.
To allow efficient restoration, each island is required to have sufficient availability to blackstart generators. 

At present, computing a controlled islanding strategy that jointly (i) optimizes the widely-adopted metrics such as power flow disruption and generator coherency, (ii) takes post-disturbance stability into consideration, and (iii) guarantees restoration after islanding,
has been less studied.
Synthesizing such an islanding strategy involves both continuous variables (post-islanding power flow and load shedding at each load bus for load-generation balance) and discrete variables (choices of tripping transmission lines and allocation of blackstart generators).
Such hybrid optimization programs are generally NP-hard. 
Finding exact solutions to these programs is computationally intensive, and hence does not scale to large-scale power systems.
Although there exist heuristic solution algorithms to compute controlled islanding strategies \cite{zhao2003study,sun2003splitting,wang2010novel}, they omit some constraints from (i)-(iii) and cannot provide provable optimality guarantees.

In this paper, we investigate how to compute a controlled islanding strategy that jointly satisfies three goals. 
The first goal is to optimize metrics including power flow disruption and generator coherency.
Second, the islanding strategy considers post-disturbance stability, which is captured by satisfying load-generation balance to each island and ensuring the post-islanding power flow on each transmission line to remain with capacity limit.
The third goal of the islanding strategy is to guarantee efficient restoration of power system after islanding by ensuring sufficient availability of energized blackstart generators within each island.
We formulate a hybrid optimization to compute such a controlled islanding strategy.
To solve the hybrid optimization problem, we propose a concept named \emph{hybrid submodularity} as a generalization of discrete submodularity. 
We prove that the objective functions and constraints are hybrid submodular, and develop an efficient algorithm to compute the islanding strategy with provable optimality guarantees.
To summarize, this paper makes the following contributions.
\begin{itemize}
    \item We formulate the problem of synthesizing a controlled islanding strategy that jointly optimizes power flow disruption, dynamical stability, post-disturbance stability, and blackstart generator allocation for restoration. 
    \item We translate the islanding problem to a matroid optimization problem. 
    We prove that transmission line capacity constraint for post-islanding power flow and blackstart generator allocation can be encoded by hybrid monotone and supermodular functions.
    \item We present a local search algorithm to compute the islanding strategy.
    We develop a $\frac{1}{2}$-optimality bound for the algorithm based on hybrid submodularity property.
    \item We evaluate our proposed approach on IEEE 118-bus, IEEE 300-bus, ActivSg 500-bus, and Polish 2383-bus systems. Compared with a baseline approach using mixed-integer linear program (MILP)-based formulation, our results show that the proposed solution approach outperforms the baseline in terms of both total cost and amount of load shedding. 
    Moreover, our approach scales well to large-scale test cases whereas the baseline approach becomes infeasible.
\end{itemize}

The present paper generalizes the approach proposed in \cite{sahabandu2022hybrid} in the following aspects. 
This paper incorporates both the post-disturbance stability and availability to blackstart generators in each island when computing controlled islanding strategies, which are not considered in \cite{sahabandu2022hybrid}.
We prove that both constraints on transmission line capacity and blackstart generator allocation are hybrid supermodular in the islanding strategies, and develop an efficient solution algorithm with $\frac{1}{2}$-optimality guarantee.
We implement the proposed solution approach to large-scale power systems, namely IEEE 118-bus, IEEE 300-bus, ActivSg 500-bus, and Polish 2383-bus systems.

The remainder of this paper is organized as follows.
Section \ref{sec:related} reviews related literature. 
In Section \ref{sec:background}, we present preliminary background on submodularity and matroids.
Section \ref{sec:formulation} formulates the controlled islanding problem.
Section \ref{sec:sol} translates the islanding problem to a matroid optimization problem, and develops a local search algorithm based on hybrid submodularity property with $\frac{1}{2}$-optimality guarantee.
In Section \ref{sec:experiment}, we evaluate our proposed approach using IEEE 118-bus, IEEE 300-bus, ActivSg 500-bus, and Polish 2383-bus systems.
Section \ref{sec:conclusion} concludes the paper.

\section{Related Work}\label{sec:related}

Power systems are often operated at a stringent operating point. 
Therefore, disturbances such as cyber attacks \cite{case2016analysis} and natural disasters \cite{busby2021cascading} could destablize power systems and cause cascading failures.
Controlled islanding has been shown to be an effective approach to mitigate cascading failures following disturbances \cite{you2004slow, sun2003splitting, ding2012two}.

A widely adopted solution to computing controlled islanding strategy is based on slow coherency \cite{yusof1993slow,you2004slow,xu2009slow}. 
These category of approaches groups generators using slow coherency analysis.
Then an islanding strategy can be found by partitioning the power system so that non-coherent generators are disconnected.
Slow coherency analysis-based computation of islanding strategies requires slow eigenbasis analysis, and may not scale to power systems of large sizes.

Spectral clustering controlled islanding \cite{ding2012two} has been proposed to improve the computation efficiency to obtain the islanding strategies.
This approach follows two steps, where the first step groups the generators based on their dynamics, and the second step trips the transmission lines using metrics such as power flow disruption or load-generation imbalance.
An alternative class of approaches to improve the scalability of controlled islanding algorithms is to applying ordered binary decision diagram (OBDD) methods on simplified graph representations of the power systems \cite{zhao2003study,sun2003splitting}. 
Such approaches may overly simplify the power systems, and do not provide provable optimality guarantees on the obtained islanding strategies.

To obtain the exact islanding strategies, mixed-integler linear programs have been formulated to incorporate different constraints such as power flow disruption, load-generation imbalance, generator coherency, and power system restoration \cite{kyriacou2017controlled,patsakis2019strong,teymouri2019milp,trodden2013optimization,trodden2013milp,kamali2020intentional,esmaili2020convex}.
However, solving NP-hard MILPs formulated on large-scale power systems is computationally expensive.
A linear program-based microgrid formation is investigated in \cite{pang2022formulation}. 
However, this approach is network dependent, and may not be readily generalized to arbitrary large-scale power systems.

Submodularity-based approaches \cite{liu2018controlled, sahabandu2022hybrid,sahabandu2022submodular} have recently been proposed to not only efficiently compute controlled islanding strategies for large-scale power systems, but also provide provable optimality guarantees for the obtained islanding strategies. 
In \cite{sahabandu2022submodular}, metrics including power flow disruption, generator coherency, and load-generation imbalance are shown to be monotone and submodular, leading to efficient local search solution algorithms with $\frac{1}{2}$ optimality bounds.
Submodular optimization has also been adopted in other application scenarios such as the placement of energy storage units \cite{qin2016submodularity}, voltage control \cite{liu2017submodular}, and distribution network configuration \cite{khodabakhsh2017submodular}.
The concept of hybrid submodularity in this paper can benefit these solutions to incorporate continuous variables such as capacities of storage units into consideration to improve the optimalities of solutions \cite{qin2016submodularity,liu2017submodular,khodabakhsh2017submodular}.
\section{Preliminary Background}\label{sec:background}

This section introduces preliminary background on submodularity and matroids. Consider a finite set $\mathcal{V}$. 
A function $f:2^\mathcal{V}\rightarrow \mathbb
R_{\geq 0}$ is monotone nondecreasing if $f(\mathcal{S})\leq f(\mathcal{T})$ for any $\mathcal{S}\subseteq\mathcal{T}$.
Function $f$ is said to be submodular \cite{fujishige2005submodular} if $$f(\mathcal{S}\cup\{v\})-f(\mathcal{S})\geq f(\mathcal{T}\cup\{v\})-f(\mathcal{T})$$ holds for any $\mathcal{S}\subseteq\mathcal{T}\subseteq\mathcal{V}$ and $v\in\mathcal{V}\setminus \mathcal{T}$.
A function $f$ is supermodular if $-f$ is submodular.

We present the definition for matroids, which give rise to a class of submodular functions, as follows.
\begin{definition}[Matroid]
A matroid $\mathcal{M}$ is a pair $(\mathcal{V},\mathcal{I})$, where $\mathcal{V}$ is a finite set, and $\mathcal{I}\subseteq 2^\mathcal{V}$ is a collection of subsets of $\mathcal{V}$ satisfying (i) $\emptyset\in\mathcal{I}$, (ii) if $\mathcal{S}\subseteq\mathcal{T}\subseteq\mathcal{V}$ and $\mathcal{T}\in\mathcal{I}$, then $\mathcal{S}\in\mathcal{I}$, and (iii) if $\mathcal{S},\mathcal{T}\in\mathcal{I}$ and $|\mathcal{T}|>|\mathcal{S}|$, then there exists $v\in\mathcal{T}\setminus\mathcal{S}$ such that $\mathcal{S}\cup\{v\}\in\mathcal{I}$.
\end{definition}

Each set in $\mathcal{I}$ is called an independent set of matroid $\mathcal{M}$.
A maximal independent set of $\mathcal{M}$ is a basis of $\mathcal{M}$.
The set of bases of matroid $\mathcal{M}$ is denoted as $\mathcal{B}(\mathcal{M})$.
The rank function $\rho_\mathcal{M}:2^\mathcal{V}\rightarrow \mathbb{Z}_{\geq 0}$ is defined as $\rho_\mathcal{M}(\mathcal{T})=\max\{|\mathcal{S}|:\mathcal{S}\subseteq\mathcal{T},\mathcal{S}\in\mathcal{I}\}$.
The rank function of matroid $\mathcal{M}$ is nondecreasing and submodular \cite{oxley2006matroid}.

In this paper, we consider graphic matroid $\mathcal{M}_G$ induced by a graph $G=(\mathcal{V},\mathcal{E})$, where $\mathcal{V}$ is a finite set of vertices, and $\mathcal{E}\subseteq\mathcal{V}\times\mathcal{V}$ is the set of edges.
Graphic matroid $\mathcal{M}_G$ is defined as $\mathcal{M}_G=(\mathcal{E},\mathcal{I})$.
Each independent set in $\mathcal{I}$ of graphic matroid $\mathcal{M}_G$ is acyclic.
For a connected graph $G$, the bases $\mathcal{B}(\mathcal{M}_G)$ are the spanning trees of graph $G$ \cite{oxley2006matroid}.
\section{System Model and Islanding Formulation}\label{sec:formulation}

We consider a power system and model it as a graph, denoted as $G=(\mathcal{V},\mathcal{E})$, where $\mathcal{V}$ is the set of vertices and $\mathcal{E}\subseteq\mathcal{V}\times\mathcal{V}$ is the set of edges. 
Without loss of generality, we assume that graph $G$ is connected.
Here we use the set of vertices $\mathcal{V}=\{1,\ldots,n\}$ to represent the set of buses, and use the set of edges $\mathcal{E}$ to represent the collection of transmission lines.
Among the buses, we denote the set of generator and load buses as $\mathcal{G}\subseteq \mathcal{V}$ and $\mathcal{L}\subseteq \mathcal{V}$, respectively, where $\mathcal{G}\cap\mathcal{L}=\emptyset$.
For each generator bus $j\in\mathcal{G}$, we denote its generation capacity as $\Bar{g}_j$.
The set of generators $\mathcal{G}$ can be classified based on whether a unit has blackstart capability or relies on cranking power from the system to restart.
If a generator bus $j$ is attached with a blackstart generator, we represent it as $\mathbb{I}_j=1$. 
Otherwise $\mathbb{I}_j=0$.
We further group the set of generators $\mathcal{G}$ into $m$ coherence groups based on their coherence \cite{you2004slow}.
We select a reference generator $r_k$ within each coherence group.
For each load bus $j\in\mathcal{L}$, we denote its maximum load as $\Bar{d}_j$. 
For each transmission line $(j,j')\in\mathcal{E}$, we denote the power flow from bus $j$ to $j'$ through transmission line $(j,j')$ as $P_{jj'}$. 
The power flow $P_{jj'}$ should satisfy $P_{jj'}\leq \Bar{P}_{jj'}$ to avoid overflow and transmission line outage, where $\Bar{P}_{jj'}$ is the transmission line capacity of $(j,j')$.


Our goal is to partition the power system $G$ into $m$ disjoint islands, denoted as $I_1,\ldots,I_m$, to mitigate cascading failures.
Each island $I_k$ can be represented as 
\begin{equation*}
    I_k=(\mathcal{V}_k,\mathcal{E}_k),
\end{equation*}
where $\mathcal{V}_k\subset\mathcal{V}$ is a subset of buses and $\mathcal{E}_k=(\mathcal{V}_k\times\mathcal{V}_k)\cap \mathcal{E}$ is a subset of transmission lines.
To ensure the islands to be disjoint, we have that $\mathcal{V}_k\cap\mathcal{V}_{k'}=\emptyset$ for all $k\neq k'$.
Note that for each reference generator $r_k$, it must be contained within island $I_k$ so that $r_k$ and $r_{k'}$ are disconnected for any $k\neq k'$.
In addition, each island $I_k$ is connected for all $k=1,\ldots,m$.
In the remainder of this paper, we will denote $H=\cup_{k=1}^m\mathcal{E}_k$ as the set of transmission lines that remain connected in the power system after formulating the islands.

In this paper, we partition the power system into $m$ number of islands by optimizing generator coherency and power flow disruption. We formulate each metric in detail below.

We denote the coherence matrix as $\Bar{A}\in\mathbb{R}^{m\times |\mathcal{G}|}$ (see Appendix for a detailed derivation of $\Bar{A}$.) The generator coherency is given as
\begin{equation}\label{eq:coherence}
    F_1(H) = \|\Bar{A}-A(H)\|_F,
\end{equation}
where $\|\cdot\|_F$ represents the Frobenius norm, and $A(H)\in\mathbb{R}^{|\mathcal{G}|\times m}$ whose $(j,k)$-th entry is defined as $\left[A(H)\right]_{jk} = 1$ if $j\in \mathcal{V}_k$ and zero otherwise.

Power flow disruption measures the total amount of power flow being disrupted due to tripping transmission lines to form islands.
The power flow disruption can be computed as
\begin{equation}\label{eq:disruption}
    F_2(H) = \sum_{(j,j')\in\mathcal{E}\setminus H}\frac{|P_{jj'}|}{2},
\end{equation}
where $P_{jj'}$ is the pre-islanding power flow. 

In addition to generator coherency and power flow disruption, the formed islands $I_1,\ldots,I_m$ need to guarantee post-disturbance stability.
In particular, we will ensure load-generation balance and transmission line capacity constraints on the post-islanding power flow.

Load-generation balance requires that the amount of power generation within each island should be sufficient to meet the total power demand.
Due to the islanding operation, there may exist deficit in power generation, requiring load shedding from some buses.
We define $c_j:\mathbb{R}\rightarrow\mathbb{R}$ as the cost function of load shedding for load bus $j$.
We assume that $c_j$ is monotone increasing and convex.
Then given set $H$, the loads $d=[d_1,\ldots,d_{|\mathcal{L}|}]^\top$ of buses in $\mathcal{L}$ can be found by
\begin{subequations}\label{eq:island balance}
\begin{align}
    F_3(H,d) = &\min_d\quad\sum_{j\in\mathcal{L}}c_j(\Bar{d}_j-d_j)\\
    &\st\quad\sum_{j\in \mathcal{L}\cap \mathcal{V}_k} d_j\leq \sum_{j\in \mathcal{G}\cap \mathcal{V}_k} \Bar{g}_j,~\forall k=1,\ldots,m\label{eq:island balance-2}\\
    &\quad\quad\quad d_j\in[0,\Bar{d}_j],~\forall j\in\mathcal{L}.\label{eq:island balance-3}
\end{align}
\end{subequations}
Any feasible solution to optimization program \eqref{eq:island balance} guarantees load-generation balance.
In the remainder of this paper, we define $(H,d)$ as an islanding strategy, which specifies the set of transmission lines that will remain in the system as well as the amount of load shedding incurred by the load buses.


We next focus on the transmission line capacity constraint for post-islanding power flow, denoted as $\Tilde{P}_{jj'}$.
The post-islanding power flow should satisfy the conservation law
\begin{equation}\label{eq:conservation}
    \sum_{j':(j,j')\in\mathcal{E}}\Tilde{P}_{jj'}+ d_j - g_j = 0,~\forall j\in\mathcal{V}.
\end{equation}
Furthermore, $\Tilde{P}_{jj'}$ needs to satisfy the transmission line capacity constraint given as below
\begin{equation}\label{eq:island capacity}
    \Tilde{P}_{jj'}\leq \Bar{P}_{jj'}, ~(j,j')\in H,
\end{equation}
in order to ensure the post-disturbance stability.

To allow the power system to be restored, each island should contain at least one blackstart generator. 
This constraint is formulated as follows
\begin{equation}\label{eq:blackstart}
    \sum_{j\in\mathcal{V}_k}\mathbb{I}_{j}\geq 1,~\forall k=1,\ldots,m.
\end{equation}

Let $\alpha_i\in(0,1)$ for $i=1,2,3$ be constants modeling the trade-off parameters among metrics $F_1(H)$, $F_2(H)$, and $F_3(H,d)$, respectively. 
We then formulate the following optimization program to compute the islanding strategy $(H,d)$
\begin{subequations}\label{eq:islanding formulation}
\begin{align}
    \min_{E,d} \quad&\alpha_1F_1(H)+\alpha_2F_2(H) + \alpha_3F_3(H,d)\\
    \st \quad & r_k\in I_k,~\forall k=1,\ldots,m\label{eq:ref gen}\\
    &\text{$I_1,\ldots,I_m$ form a set of islands}\label{eq:islanding validity constraint}\\
    &\text{Eqn.  \eqref{eq:island balance-2}, \eqref{eq:island balance-3}, \eqref{eq:conservation}, \eqref{eq:island capacity}, and \eqref{eq:blackstart}}\label{eq:islanding constraint}
\end{align}
\end{subequations}

\section{Hybrid Submodularity and Proposed Solution}\label{sec:sol}

Solving optimization problem \eqref{eq:islanding formulation} is computationally intractable for large-scale power systems since it involves coupled continuous and discrete variables. Furthermore, the solution space of the discrete variable $H$ grows exponentially with respect to the size of graph $G$.

In this section, we present a computationally efficient solution approach to compute an islanding strategy with provable optimality guarantee. 
We first propose a concept named hybrid submodularity. 
Next, we  prove that the metrics ($F_1(H)$, $F_2(H)$, and $F_3(H,d)$) and the constraints (Eqn. \eqref{eq:island balance-2}, \eqref{eq:island balance-3}, \eqref{eq:island capacity}, and \eqref{eq:blackstart}) satisfy hybrid submodular properties. 
We finally present a local search algorithm to compute the islanding strategy and prove the optimality guarantee based on the hybrid submodular property. 

\subsection{Hybrid Submodularity}
This subsection introduces the concepts of hybrid monotonicity and hybrid submodularity.
\begin{definition}[Hybrid Monotonocity]
Let $\mathcal{V}$ be a finite set and $\mathcal{D}\subset\mathbb{R}^n$. A function $f:2^\mathcal{V}\times \mathcal{F}(\mathcal{D})\rightarrow \mathbb{R}$ is hybrid monotone nondecreasing if, for any finite sets $\mathcal{S}\subseteq\mathcal{T}\subseteq\mathcal{V}$ and $\Lambda\subseteq\Lambda'\subseteq\mathcal{D}$, we have 
\begin{equation}
    f(\mathcal{S},\Lambda)\leq f(\mathcal{T},\Lambda'),
\end{equation}
where $\mathcal{F}(\mathcal{D})$ represents the collection of finite subsets of $\mathcal{D}$.
A function $f$ is hybrid monotone nonincreasing if $-f$ is hybrid monotone nondecreasing.
\end{definition}
We next define hybrid submodularity which generalizes the classic submodularity property defined over discrete sets by incorporating infinite ground set.
\begin{definition}[Hybrid Submodularity]\label{def:hybrid submodular}
Let $\mathcal{V}$ be a finite set and $\mathcal{D}\subset\mathbb{R}^n$. A function $f:2^\mathcal{V}\times \mathcal{F}(\mathcal{D})\rightarrow \mathbb{R}$ is hybrid submodular if, for any $\mathcal{S}\subseteq\mathcal{T}\subseteq\mathcal{V}$ and $\Lambda\subseteq\Lambda'\subseteq\mathcal{D}$, the following properties hold:
\begin{enumerate}
    \item For any $j\in\mathcal{V}\setminus\mathcal{T}$, we have
    \begin{equation*}
        f(\mathcal{S}\cup\{j\},\Lambda)-f(\mathcal{S},\Lambda)\geq f(\mathcal{T}\cup\{j\},\Lambda')-f(\mathcal{T},\Lambda').
    \end{equation*}
    \item For any $\lambda\in \mathcal{D}\setminus\Lambda'$, we have
    \begin{equation*}
        f(\mathcal{S},\Lambda\cup\{\lambda\})-f(\mathcal{S},\Lambda)\geq f(\mathcal{T},\Lambda'\cup\{\lambda\})-f(\mathcal{T},\Lambda').
    \end{equation*}
\end{enumerate}
\end{definition}
We say a function $f$ is hybrid supermodular if $-f$ is hybrid submodular.
We finally present hybrid matroid as a generalization of matroids to discrete sets.
\begin{definition}[Hybrid Matroid]\label{def:hybrid matroid}
A hybrid matroid is a tuple $\mathcal{M}=(\mathcal{V},\mathcal{D},\mathcal{I})$, where $\mathcal{V}$ is a finite set, $\mathcal{D}\subseteq\mathbb{R}^n$, and $\mathcal{I}$ is a collection of subsets of $\mathcal{V}\times\mathcal{D}$ such that
\begin{enumerate}
    \item $(\emptyset,\emptyset)\in\mathcal{I}$.
    \item if $(\mathcal{T},\Lambda')\in\mathcal{I}$, $\mathcal{S}\subseteq\mathcal{T}$, and $\Lambda\subseteq\Lambda'$, then $(\mathcal{S},\Lambda)\in\mathcal{I}$.
    \item if $(\mathcal{S},\Lambda),(\mathcal{T},\Lambda')\in\mathcal{I}$, where $|\mathcal{S}|<|\mathcal{T}|$ and $|\Lambda|<|\Lambda'|$, then there exists $v\in\mathcal{T}\setminus\mathcal{S}$ and $\lambda\in\Lambda'\setminus\Lambda$ such that $(\mathcal{S}\cup\{v\},\lambda\cup\{\lambda\})\in\mathcal{I}$.
\end{enumerate}
\end{definition}
If set $(\mathcal{S},\Lambda)\in\mathcal{I}$, we then say $(\mathcal{S},\Lambda)$ is an indpendent set of hybrid matroid $\mathcal{M}$.
A maximal independent set of $\mathcal{M}$ is a basis of $\mathcal{M}$.
We denote the set of bases of $\mathcal{M}$ as $\mathcal{B}(\mathcal{M})$.

Let a function $f$ be hybrid monotone nondecreasing and submodular and $\mathcal{M}$ be a hybrid matroid. 
Then we can derive a local optimality guarantee for function $f$, as stated below.
\begin{proposition}[\cite{sahabandu2022hybrid}]\label{prop:optimality}
Suppose that function $f:2^\mathcal{V}\times \mathcal{F}(\mathcal{D})\rightarrow\mathbb{R}$ is hybrid monotone nondecreasing and submodular. Let $\mathcal{M}$ be a hybrid matroid and $(\mathcal{S},\Lambda)\in\mathcal{B}(\mathcal{M})$. 
For any $v\in\mathcal{S}$, $t\notin \mathcal{S}$, $\lambda\in\Lambda$, and $\lambda'\notin\Lambda$ satisfying $(\mathcal{S}\setminus\{v\}\cup\{t\},\Lambda\setminus\{\lambda\}\cup\{\lambda'\})$, if we have
\begin{equation*}
    f(\mathcal{S},\Lambda)\geq f(\mathcal{S}\setminus\{v\}\cup\{t\},\Lambda\setminus\{\lambda\}\cup\{\lambda'\}),
\end{equation*}
then for any $(\mathcal{T},\Lambda')\in\mathcal{B}(\mathcal{M})$, $f(\mathcal{S},\Lambda)\geq \frac{1}{2}f(\mathcal{T},\Lambda')$.
\end{proposition}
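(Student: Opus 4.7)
The plan is to lift the classical local-search $\tfrac{1}{2}$-approximation argument of Fisher, Nemhauser and Wolsey for monotone submodular maximization over a matroid into the hybrid setting. Fix a local optimum $(\mathcal{S},\Lambda)\in\mathcal{B}(\mathcal{M})$ and an arbitrary basis $(\mathcal{T},\Lambda')\in\mathcal{B}(\mathcal{M})$. The target inequality $f(\mathcal{S},\Lambda)\geq \tfrac{1}{2}f(\mathcal{T},\Lambda')$ will be obtained by sandwiching $f(\mathcal{S}\cup\mathcal{T},\Lambda\cup\Lambda')$ between $f(\mathcal{T},\Lambda')$ (via hybrid monotonicity) and $2f(\mathcal{S},\Lambda)$ (via hybrid submodularity plus local optimality across a matched pairing of swaps).

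The first step is to establish a hybrid analogue of Brualdi's strong basis-exchange lemma: there exist bijections $\pi:\mathcal{S}\setminus\mathcal{T}\to\mathcal{T}\setminus\mathcal{S}$ and $\sigma:\Lambda\setminus\Lambda'\to\Lambda'\setminus\Lambda$ such that, for every $v\in\mathcal{S}\setminus\mathcal{T}$ and every $\lambda\in\Lambda\setminus\Lambda'$, the set $\bigl(\mathcal{S}\setminus\{v\}\cup\{\pi(v)\},\;\Lambda\setminus\{\lambda\}\cup\{\sigma(\lambda)\}\bigr)$ lies in $\mathcal{B}(\mathcal{M})$. I would derive this by iterating axiom (3) of Definition~\ref{def:hybrid matroid} on the two coordinates in parallel; the two axes behave like independent matroid structures conditional on the other axis being fixed, so the usual inductive construction of an exchange bijection on each coordinate still goes through. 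A minor bookkeeping nuisance is to ensure the sizes $|\mathcal{S}\setminus\mathcal{T}|$ and $|\Lambda\setminus\Lambda'|$ can be paired up with the joint swaps used in the local-optimality hypothesis; if the two cardinalities differ, I would extend the smaller bijection to the diagonal (swapping an element with itself) so that every discrete swap is matched with some continuous swap, which leaves the inequalities valid.

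With the bijections in hand, for each matched pair $(v,\pi(v))$ and $(\lambda,\sigma(\lambda))$ the local-optimality condition gives
\begin{equation*}
f(\mathcal{S},\Lambda)\;\geq\; f\bigl(\mathcal{S}\setminus\{v\}\cup\{\pi(v)\},\Lambda\setminus\{\lambda\}\cup\{\sigma(\lambda)\}\bigr).
\end{equation*}
Applying the two hybrid submodularity inequalities of Definition~\ref{def:hybrid submodular} successively, first to insert $\pi(v)$ into $\mathcal{S}\setminus\{v\}$ versus into $\mathcal{S}$, and then to insert $\sigma(\lambda)$ into $\Lambda\setminus\{\lambda\}$ versus into $\Lambda$, each such inequality transforms into
\begin{equation*}
f(\mathcal{S},\Lambda)-f(\mathcal{S}\setminus\{v\},\Lambda\setminus\{\lambda\})\;\geq\; f(\mathcal{S}\cup\{\pi(v)\},\Lambda\cup\{\sigma(\lambda)\})-f(\mathcal{S},\Lambda).
\end{equation*}
Summing the pairs and telescoping the left-hand side (again using hybrid submodularity to bound each marginal removal from $\mathcal{S}$ and $\Lambda$ by the marginal removal from $(\mathcal{S},\Lambda)$) yields $f(\mathcal{S},\Lambda)-f(\mathcal{S}\cap\mathcal{T},\Lambda\cap\Lambda')\geq \sum \bigl[f(\mathcal{S}\cup\{\pi(v)\},\Lambda\cup\{\sigma(\lambda)\})-f(\mathcal{S},\Lambda)\bigr]$, while telescoping the right-hand side in the standard way produces a lower bound of $f(\mathcal{S}\cup\mathcal{T},\Lambda\cup\Lambda')-f(\mathcal{S},\Lambda)$. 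Combining the two and using nonnegativity of $f$ on $(\mathcal{S}\cap\mathcal{T},\Lambda\cap\Lambda')$ gives $2f(\mathcal{S},\Lambda)\geq f(\mathcal{S}\cup\mathcal{T},\Lambda\cup\Lambda')$, and hybrid monotonicity finishes the job via $f(\mathcal{S}\cup\mathcal{T},\Lambda\cup\Lambda')\geq f(\mathcal{T},\Lambda')$.

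The main obstacle I foresee is the coordination between the two axes. In the pure matroid setting, one bijection suffices and a single sweep of submodular decomposition gives the bound; here I must apply two submodularity inequalities in a consistent order and verify that the telescoping sums on the $\mathcal{V}$-side and the $\mathcal{D}$-side do not double-count or mis-align the marginal contributions. Carefully choosing the order (remove all discrete elements first, then all continuous elements, or interleave them using a fixed linear ordering) and invoking the two clauses of Definition~\ref{def:hybrid submodular} one coordinate at a time should resolve the issue, but this is where the proof is most easily derailed and where I would spend the bulk of my verification effort.
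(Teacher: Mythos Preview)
The paper does not actually prove Proposition~\ref{prop:optimality}; it is quoted verbatim from the cited reference \cite{sahabandu2022hybrid} and used as a black box. So there is no ``paper's own proof'' to compare against, and your proposal is in effect an attempt to reconstruct the argument behind the citation.

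That said, your strategy is the right one: it is precisely the Fisher--Nemhauser--Wolsey local-search argument, run on each coordinate. The chain
\[
f(\mathcal{S}\cup\mathcal{T},\Lambda\cup\Lambda')-f(\mathcal{S},\Lambda)\;\le\;\sum\bigl[f(\mathcal{S}\cup\{\pi(v)\},\Lambda\cup\{\sigma(\lambda)\})-f(\mathcal{S},\Lambda)\bigr]\;\le\;f(\mathcal{S},\Lambda)-f(\mathcal{S}\cap\mathcal{T},\Lambda\cap\Lambda')
\]
together with monotonicity is exactly what one expects. Two cautions are worth flagging. First, you invoke ``nonnegativity of $f$ on $(\mathcal{S}\cap\mathcal{T},\Lambda\cap\Lambda')$,'' but the proposition as stated only assumes $f$ is hybrid monotone nondecreasing with codomain $\mathbb{R}$; you need either $f\ge 0$ or $f(\emptyset,\emptyset)\ge 0$ to drop that term, and this hypothesis is implicit rather than stated. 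Second, and more substantively, the hybrid Brualdi step is not as automatic as you suggest. Axiom~(3) of Definition~\ref{def:hybrid matroid} is a \emph{joint} augmentation axiom requiring both $|\mathcal{S}|<|\mathcal{T}|$ and $|\Lambda|<|\Lambda'|$ simultaneously; it does not directly decompose into two independent matroid structures on the coordinates, so the claim that ``the two axes behave like independent matroid structures conditional on the other axis being fixed'' needs a genuine argument, not just an appeal to the classical proof. Your instinct that this is where the proof is most easily derailed is correct, and in a full write-up this lemma would need to be stated and proved carefully rather than asserted.
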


\subsection{Reformulating Eqn. \eqref{eq:islanding formulation} as Hybrid Matroid Optimization}

This subsection relaxes the formulation in Eqn. \eqref{eq:islanding formulation}, and converts the problem to a hybrid matroid optimization program. 
We show that the objective function of the hybrid matroid optimization program is hybrid supermodular.

We augment the power system $G$ to $\Bar{G}=(\Bar{\mathcal{V}},\Bar{\mathcal{E}})$, where $\Bar{\mathcal{V}}=\mathcal{V}\cup\{a\}$ is obtained by introducing a supernode $a$, and 
\begin{equation}
    \Bar{\mathcal{E}}=\mathcal{E}\cup\{(a,r_k):k=1,\ldots,m\}.
\end{equation}
The set of edges $\{(a,r_k):k=1,\ldots,m\}$ connects the supernode $a$ with each reference generator $r_i$.

Since the reference generators belong to different coherence groups, they should be partitioned into different islands, i.e., there exist no $r_k$ and $r_{k'}$ such that $r_k,r_{k'}\in \mathcal{V}_k$ for any $k\neq k'$.
Based on this insight, we have the following preliminary result.
\begin{lemma}\label{lemma:spanning tree}
If set $H\cup\{(a,r_k):k=1,\ldots,m\}$ forms a spanning tree of $\Bar{G}$, then islands $I_1,\ldots,I_m$ induced by $H$ on power system $G$ satisfy constraints \eqref{eq:ref gen} and \eqref{eq:islanding validity constraint}.
\end{lemma}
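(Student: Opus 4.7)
The plan is to exploit the elementary fact that removing a vertex of degree $d$ from a tree produces exactly $d$ subtrees, and to identify those subtrees with the islands. Let $T = H \cup \{(a,r_k): k=1,\ldots,m\}$ and assume $T$ is a spanning tree of $\bar{G}$. First I would observe that by construction the supernode $a$ has exactly $m$ incident edges in $\bar{G}$, namely $(a,r_1),\ldots,(a,r_m)$, and all of these edges belong to $T$. Hence the degree of $a$ in $T$ is exactly $m$, and its neighbors in $T$ are precisely the reference generators $r_1,\ldots,r_m$.

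Next I would delete $a$ (together with its $m$ incident edges) from $T$. The resulting graph is a subgraph of $T$ on vertex set $\mathcal{V}$ whose edge set is exactly $H$. Since $T$ is a tree, this deletion splits $T$ into exactly $m$ connected components, one for each neighbor of $a$. I would label these components $\mathcal{V}_1,\ldots,\mathcal{V}_m$ so that $r_k \in \mathcal{V}_k$; this labeling is well-defined because the $m$ neighbors $r_1,\ldots,r_m$ of $a$ necessarily lie in distinct components (if two reference generators $r_k, r_{k'}$ belonged to the same component of $T \setminus \{a\}$, then together with the two edges $(a,r_k)$ and $(a,r_{k'})$ they would form a cycle in $T$, contradicting acyclicity).

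With this labeling in hand I would set $\mathcal{E}_k = (\mathcal{V}_k \times \mathcal{V}_k) \cap \mathcal{E}$ and $I_k = (\mathcal{V}_k,\mathcal{E}_k)$, and verify each requirement for an islanding. Disjointness $\mathcal{V}_k \cap \mathcal{V}_{k'} = \emptyset$ for $k\neq k'$ is immediate since distinct connected components share no vertices; coverage $\bigcup_k \mathcal{V}_k = \mathcal{V}$ follows because $T$ spans $\bar{\mathcal{V}}$ and the only removed vertex is $a\notin\mathcal{V}$; connectedness of each $I_k$ follows because each $\mathcal{V}_k$ is a tree in $H$ (an induced subtree of $T$), so in particular connected inside $G$. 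Finally $r_k \in \mathcal{V}_k$ by the labeling, which is exactly constraint \eqref{eq:ref gen}.

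I do not foresee a serious obstacle here; the statement is essentially the standard correspondence between spanning trees of a graph contracted at a supernode and spanning forests of the original graph with prescribed roots. The one point that deserves care, and which I highlighted above, is justifying that the $m$ reference generators land in \emph{distinct} components after removing $a$, which is where acyclicity of $T$ is actually used. Everything else is bookkeeping about tree components.
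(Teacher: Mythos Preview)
Your proposal is correct and follows essentially the same approach as the paper: both arguments hinge on the observation that if two reference generators $r_k,r_{k'}$ were in the same component of $(\mathcal{V},H)$, then the path between them in $H$ together with the edges $(a,r_k),(a,r_{k'})$ would create a cycle in $T$, contradicting that $T$ is a tree. Your presentation is slightly more streamlined in that you invoke the elementary fact that deleting a degree-$m$ vertex from a tree yields exactly $m$ subtrees and then read off all the required properties directly, whereas the paper phrases the same cycle argument as a proof by contradiction and then separately counts components; but the substance is the same.
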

\begin{proof}
We prove the lemma by contradiction. 
Suppose that set $H\cup\{(a,r_k):k=1,\ldots,m\}$ forms a spanning tree of $\Bar{G}$ while there exist reference generators $r_k,r_{k'}\in \mathcal{V}_{k''}$ for some $k\neq k'$.
Since $r_k,r_{k'}\in \mathcal{V}_{k''}$ and island $I_{k''}$ is connected to be an island, we have that there exists some path $r_k,j_1,\ldots,j_t,r_{k'}$ that connects reference generators $r_k$ and $r_{k'}$ without going through supernode $a$, i.e., $j_l\neq a$ for all $l=1,\ldots,t$.
By the construction of graph $\Bar{G}$, reference generators $r_k$ and $r_{k'}$ are also connected by another path $r_k,a,r_{k'}$ that goes through supernode $a$.
Hence, paths $r_k,a,r_{k'}$ and $r_k,j_1,\ldots,j_t,r_{k'}$ form a cycle, which contradicts the definition of spanning tree. 
Therefore, we have that if set $H\cup\{(a,r_k):k=1,\ldots,m\}$ forms a spanning tree of $\Bar{G}$, then constraint \eqref{eq:ref gen} is satisfied.

Since set $H\cup\{(a,r_k):k=1,\ldots,m\}$ forms a spanning tree of graph $\bar{\mathcal{G}}$, we have that $H$ is a spanning forest for power system $G$ containing $m$ trees.
If $H$ contains more that $m$ trees, then $H\cup\{(a,r_k):k=1,\ldots,m\}$ cannot form a spanning tree since there must exists some nodes that are disconnected from the spanning tree. 
If $H$ contains less that $m$ trees, then there must exist some reference generators $r_k,r_{k'}\in I_{k''}$, which has been falsified earlier in the proof.
We thus have that $\mathcal{V}_k\cap \mathcal{V}_{k'}=\emptyset$ for any $k\neq k'$.
Furthermore, we have that each tree $I_k$ contained in the spanning forest induced by $H$ must be connected by using the definition of spanning tree.
Combining these arguments yields the result that the islands $I_1,\ldots,I_m$ satisfy constraints \eqref{eq:islanding validity constraint}.
\end{proof}

Lemma \ref{lemma:spanning tree} allows us to encode constraints \eqref{eq:ref gen} and \eqref{eq:islanding validity constraint} using a graphic matroid constraint $E\in\mathcal{B}(\mathcal{M}_{\Bar{G}})$,
where $\mathcal{M}_{\Bar{G}}$ represents the graphic matroid of $\Bar{G}$ and $E$ is the set of edges in the spanning tree. 
We define 
\begin{equation*}
    \bar{\mathcal{D}} = \prod_{k=1}^m\prod_{j\in\mathcal{L}}[0,\Bar{d}_j],
\end{equation*}
and a hybrid matroid $\bar{\mathcal{M}}=(\bar{\mathcal{E}},\bar{\mathcal{D}},\bar{\mathcal{I}})$.
We can thus restrict ourselves to $(E,\Lambda)\in \bar{\mathcal{I}}$ to search for islanding strategies, where $E\in\mathcal{B}(\mathcal{M}_{\bar{\mathcal{G}}})$, $d\in\Lambda$, and $|\Lambda|\leq 1$.
Therefore, we translate optimization program \eqref{eq:islanding formulation} to the following constrained matroid optimization program
\begin{subequations}\label{eq:matroid optimization}
\begin{align}
    \min_{E,d} \quad&\alpha_1F_1(E)+\alpha_2F_2(E) + \alpha_3F_3(E,d)\\
    \st \quad & (E,d)\in\mathcal{B}(\bar{\mathcal{M}})\label{eq:matroid opt constraint}\\
    &\text{Eqn. \eqref{eq:island balance-2}, \eqref{eq:island balance-3}, \eqref{eq:conservation}, \eqref{eq:island capacity}, and \eqref{eq:blackstart}}\label{eq:matroid opt islanding constraint}
\end{align}
\end{subequations}

\subsection{Hybrid Sumodularity-based Algorithm Development }

Solving the constrained matroid optimization in Eqn. \eqref{eq:matroid optimization} is still challenging due to the presence of coupled discrete and continuous variables. 
In what follows, we prove that the objective function and constraint \eqref{eq:matroid opt islanding constraint} are hybrid supermodular in the islanding strategy.
Using the hybrid supermodularity result, we then develop a local search algorithm to efficiently compute the islanding strategy.
We conclude this section by presenting the optimality guarantee of our developed algorithm.

We define an indicator function $\chi_{kj}(E)$ for each $k=1,\ldots,m$ and $j\in\mathcal{V}$ as follows
\begin{equation*}
    \chi_{kj}(E)=\begin{cases}
    1,&\mbox{ if bus $j$ is in island $I_k$,}\\
    0,&\mbox{ otherwise}.
    \end{cases}
\end{equation*}
We have the following preliminary result.
\begin{lemma}\label{lemma:F2 F3 supermodular}
Let $\mathcal{M}_{kj}$ be the graphic matroid of graph $(\mathcal{V},E\cup\{(r_k,j)\})$.
There is a nonincreasing supermodular function $\bar{\chi}_{kj}(E)=n-m-\rho_{\mathcal{M}_{kj}}(E\cup\{(r_k,j)\})$ such that $\chi_{kj}(E)=\bar{\chi}_{kj}(E)$ for any $E\in\mathcal{B}(\mathcal{M}_{\bar{G}})$.
Furthermore, there are nonincreasing supermodular functions $\bar{F}_1(E)$ and $\bar{F}_2(E)$ defined as 
\begin{align*}
    \bar{F}_1(E) &= \sum_{k=1}^m\sum_{j\in\mathcal{G}}(\bar{A}_{kj} + (1-2\bar{A}_{kj})\Omega_{kj}(E)),\\
    \bar{F}_2(E) &= \sum_{k'\neq k}\sum_{(j,j')\in\mathcal{E}}\phi_{k,k',j,j'}(E)|P_{jj'}|,
\end{align*}
such that $\bar{F}_1(E)=F_1(E)$ and $\bar{F}_2(E)=F_2(E)$ for all $E\in\mathcal{B}(\mathcal{M}_{\bar{G}})$, where
\begin{align*}
    \Omega_{kj}(E)&=\begin{cases}
    \bar{\chi}_{kj}(E),&\mbox{ if $\bar{A}_{kj}\leq \frac{1}{2}$},\\
    1-\sum_{k'\neq k}\bar{\chi}_{k'j}(E),&\mbox{ if $\bar{A}_{kj}>\frac{1}{2}$}.
    \end{cases}\\
    \phi_{k,k',j,j'}(E) &= \begin{cases}
    1,&\mbox{ if $j\in I_k$ and $j'\in I_{k'}$},\\
    0,&\mbox{ otherwise}.
    \end{cases}
\end{align*}
\end{lemma}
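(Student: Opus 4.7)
The plan is to prove the lemma in three pieces, corresponding to the three claimed formulas: first the surrogate indicator $\bar{\chi}_{kj}$, then $\bar{F}_1$, and finally $\bar{F}_2$. The common thread is that we build nonincreasing supermodular surrogates from the rank function of an appropriate graphic matroid, which is known from the preliminaries to be nondecreasing and submodular, and verify that the surrogates coincide with the original quantities precisely on $\mathcal{B}(\mathcal{M}_{\bar{G}})$.

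For $\bar{\chi}_{kj}$, I lean on the classical fact recalled in Section \ref{sec:background} that $\rho_{\mathcal{M}_{kj}}$ is nondecreasing and submodular on its ground set. Consequently $-\rho_{\mathcal{M}_{kj}}$ is nonincreasing and supermodular, and adding the constant $n-m$ preserves both properties, so $\bar{\chi}_{kj}$ is nonincreasing and supermodular in $E$. To verify $\bar{\chi}_{kj}(E)=\chi_{kj}(E)$ on $E\in\mathcal{B}(\mathcal{M}_{\bar{G}})$, I use the graphic interpretation that rank equals $|\mathcal{V}|$ minus the number of connected components. For any such $E$, Lemma~\ref{lemma:spanning tree} tells us that the induced subgraph on $\mathcal{V}$ decomposes into exactly the $m$ islands $I_1,\ldots,I_m$, so inserting the edge $(r_k,j)$ leaves the component count at $m$ when $j\in I_k$ and reduces it to $m-1$ otherwise. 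Substitution into the definition then matches the indicator $\chi_{kj}$ after the constant $n-m$ is accounted for.

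For $\bar{F}_1$, I first expand $F_1$ entry-wise using $[A(E)]_{jk}=\chi_{kj}(E)\in\{0,1\}$. Exploiting $\chi_{kj}^2=\chi_{kj}$, each entry contributes $\bar{A}_{kj}+(1-2\bar{A}_{kj})\chi_{kj}(E)$. The next step is to replace $\chi_{kj}$ with a supermodular surrogate valid on bases. The subtlety is that the coefficient $(1-2\bar{A}_{kj})$ can be negative, which would reverse monotonicity and supermodularity when multiplied through. This is precisely what the two-case definition of $\Omega_{kj}$ addresses: for $\bar{A}_{kj}\leq 1/2$ the coefficient is nonnegative and $\Omega_{kj}=\bar{\chi}_{kj}$ is used directly, giving a nonincreasing supermodular summand; for $\bar{A}_{kj}>1/2$ we instead use $1-\sum_{k'\neq k}\bar{\chi}_{k'j}(E)$, which agrees with $\chi_{kj}$ on a basis (since every bus lies in exactly one island) while being nondecreasing and submodular as a function of $E$, so multiplying by the negative coefficient again yields a nonincreasing supermodular contribution. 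Summing over $k,j$ and using closure of the class under nonnegative linear combinations gives the claim for $\bar{F}_1$.

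For $\bar{F}_2$, the function $\phi_{k,k',j,j'}(E)|P_{jj'}|$ is a nonnegative weighting of the indicator that edge $(j,j')$ crosses between islands $I_k$ and $I_{k'}$. On a basis this indicator factors through $\chi_{kj}(E)\chi_{k'j'}(E)$, so it may be replaced using $\bar{\chi}$ without changing the value on $\mathcal{B}(\mathcal{M}_{\bar{G}})$. As a function of general $E$, adding an edge can only merge two components, which can only convert a cut edge into an uncut one; consequently each $\phi$-term is nonincreasing, and the marginal decrement when adding a new edge is itself nonincreasing in $E$, giving supermodularity. Weighting by $|P_{jj'}|\geq 0$ and summing preserves both properties. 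The main obstacle is unambiguously the $\bar{F}_1$ step: the sign change in $(1-2\bar{A}_{kj})$ forces the bifurcated surrogate $\Omega_{kj}$, and the key technical point is to verify that $1-\sum_{k'\neq k}\bar{\chi}_{k'j}(E)$ is nondecreasing submodular so that the sign flip through multiplication delivers the correct directionality; a secondary delicate point throughout is tracking that the substitutions $\chi_{kj}\mapsto\Omega_{kj}$ and $\chi_{kj}\chi_{k'j'}\mapsto \phi_{k,k',j,j'}$ are faithful precisely on $\mathcal{B}(\mathcal{M}_{\bar{G}})$, which rests on the single-island membership guaranteed by Lemma~\ref{lemma:spanning tree}.
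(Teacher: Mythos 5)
Your treatment of $\bar{\chi}_{kj}$ and of $\bar{F}_1$ follows the paper's proof essentially verbatim: supermodularity of $\bar{\chi}_{kj}$ from submodularity of the matroid rank function, the entry-wise expansion of the Frobenius norm using $\chi_{kj}^2=\chi_{kj}$, and the two-case sign analysis in which $(1-2\bar{A}_{kj})\bar{\chi}_{kj}$ is used when the coefficient is nonnegative and $(1-2\bar{A}_{kj})\bigl(1-\sum_{k'\neq k}\bar{\chi}_{k'j}\bigr)$ when it is negative, with $\sum_k\chi_{kj}=1$ guaranteeing agreement on bases. That part is correct and is the same argument the paper gives (the paper, like you, leaves the component-count arithmetic for $\bar{\chi}_{kj}=\chi_{kj}$ implicit, citing its earlier work instead of your explicit ``rank equals $|\mathcal{V}|$ minus number of components'' computation).

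The weak point is your $\bar{F}_2$ step, and here your sketch as written does not go through. First, the natural extension of $\phi_{k,k',j,j'}$ off the bases as the product of connectivity indicators $\chi_{kj}(E)\chi_{k'j'}(E)$ is \emph{nondecreasing} in $E$: adding an edge can only create connections, so once $j$ is reachable from $r_k$ and $j'$ from $r_{k'}$ the product stays at $1$ even after the two components merge. Your claim that ``adding an edge can only convert a cut edge into an uncut one, so each $\phi$-term is nonincreasing'' is therefore describing a different surrogate than the one you introduced, and a product of two supermodular indicators is in any case not automatically supermodular. Second, your stated criterion — ``the marginal decrement when adding a new edge is itself nonincreasing in $E$'' — is the defining property of \emph{submodularity} for a nonincreasing function; supermodularity requires the decrement to be nondecreasing as the ground set grows. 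So the supermodularity of $\phi_{k,k',j,j'}$ needs an explicit nonincreasing construction (e.g., one built from rank differences of augmented graphic matroids, in the same spirit as $\bar{\chi}_{kj}$) rather than the component-merging heuristic. To be fair, the paper itself does not prove this either — it delegates the supermodularity of $\phi_{k,k',j,j'}$ to its earlier reference — but if you intend to supply an argument rather than a citation, this is the piece that must be repaired.
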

\begin{proof}
The monotonicty and supermodularity of function $\bar{\chi}$ follow from the monotonicity and submodularity of the rank function of a matroid \cite{oxley2006matroid}.
When $E\in\mathcal{B}(\mathcal{M}_{\bar{G}})$, then we have that $\chi_{kj}(E)=1-(\rho_{\mathcal{M}_{kj}}(E\cup\{(r_k,j)\})-\rho_{\mathcal{M}}(E))$, where $\mathcal{M}$ is the graphic matroid of $G$ \cite{sahabandu2022submodular}. 
Note that $\rho_{\mathcal{M}}(E) = n-m-1$ since the number of edges in a spanning tree is equal to the number of nodes minus one. We thus have that $\bar{\chi}_{kj}(E)=\chi_{kj}(E) = 1-(\rho_{\mathcal{M}_{kj}}(E\cup\{(r_k,j)\})-\rho_{\mathcal{M}}(E))$.

Let $\bar{A}$ be the coherence matrix \cite{chow1982time}. 
We then show that there exists a nonincreasing and supermodular function $\bar{F}_1(E)$
such that $\bar{F}_1(E)=F_1(E)$ for all $E\in\mathcal{B}(\mathcal{M}_{\bar{G}})$.
Since $\bar{\chi}_{kj}(E)$ is supermodular, we have that $\Omega_{kj}(E)$ is also supermodular when $\bar{A}_{kj}\leq \frac{1}{2}$, making $(1-2\bar{A}_{kj})\Omega_{kj}(E)$ further supermodular.
When $\bar{A}_{kj}>\frac{1}{2}$, then $-(1-2\bar{A}_{kj})>0$, making $(1-2\bar{A}_{kj})\Omega_{kj}(E)=(1-2\bar{A}_{kj})-(1-2\bar{A}_{kj})\sum_{k'\neq k}\bar{\chi}_{k'j}(E)$ supermodular.
The statement that $\bar{F}_1(E)=F_1(E)$ can be verified by the definition of Frobenius norm and the fact that $\sum_{k=1}^m\chi_{kj}(E)=1$ for any set $E$ that forms islands.

One can verify that function $\bar{F}_2(E)$ is nonincreasing and supmodular when $E\in\mathcal{B}(\mathcal{M}_{\bar{G}})$ by showing the supermodularity of $\phi_{k,k',j,j'}(E)$, which can be found in \cite{sahabandu2022submodular}.
\end{proof}

Lemma \ref{lemma:F2 F3 supermodular} indicates that metrics $F_1(E)$ and $F_2(E)$ can be rewritten using monotone and supermodular functions $\bar{F}_1(E)$ and $\bar{F}_2(E)$, respectively.
In the following, we focus on metric $F_3(E,d)$ which involves both continuous and discrete variables. 
We define a set of auxiliary variables $d_{kj}$ for each $k=1,\ldots,m$ and $j\in\mathcal{L}$ to model the amount of load attached to bus $j$ when contained in island $I_k$.
By the definition of variable $\chi_{kj}(E)$ and $d_{kj}$, we have that $d_{kj}=0$ must hold when $\chi_{kj}(E)=0$.
As a consequence, we relax metric $F_3(E,d)$ by using a function $\bar{F}_3(E,d)$ as
\begin{equation}
   \bar{F}_3(E,d)=  \min_{d\in\Lambda}\sum_{j\in\mathcal{L}}c_j(\bar{d}_j-\sum_{k=1}^md_{kj}).
\end{equation}
We next show that function $\bar{F}_3(E,d)$ is hybrid monotone and supermodular in the islanding strategy.
\begin{lemma}\label{lemma:F-1 supermodular}
Function $\bar{F}_3(E,d)$ is hybrid monotone nonincreasing and hybrid supermodular in islanding strategy $(E,d)$.
\end{lemma}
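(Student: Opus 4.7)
The plan is to reformulate $\bar{F}_3(E,d)$ as a parametric LP in which the dependence on $E$ enters only through the membership quantities $\bar{\chi}_{kj}(E)$ introduced in Lemma~\ref{lemma:F2 F3 supermodular}, and then to verify the two conditions of Definition~\ref{def:hybrid submodular} separately in the discrete argument $E$ and the continuous argument $\Lambda$. Concretely, after rewriting the inner minimization as $\min \sum_{j\in\mathcal{L}}c_j(\bar{d}_j-\sum_k d_{kj})$ subject to $d_{kj}\leq\bar{d}_j\bar{\chi}_{kj}(E)$ together with the per-island balance $\sum_{j\in\mathcal{L}}d_{kj}\leq\sum_{j\in\mathcal{G}}\bar{g}_j\bar{\chi}_{kj}(E)$, the full dependence on the discrete argument is captured by the coefficients $\bar{\chi}_{kj}(E)$, which makes Lemma~\ref{lemma:F2 F3 supermodular} immediately available as a tool.

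I would begin with hybrid monotonicity, the easy half. Monotonicity in $\Lambda$ follows immediately because enlarging $\Lambda$ enlarges the outer feasible set, so the minimum cannot increase. Monotonicity in $E$ follows because enlarging $E$ relaxes both classes of constraints above (more $d_{kj}$ escape the hard zero bound and more generation is pooled into each island); since each $c_j$ is monotone increasing, any additional load that can now be served strictly lowers the cost. For the continuous half of hybrid supermodularity, I would give a short exchange argument: adding an extra option $\lambda$ to $\Lambda$ can improve the inner min by at most $\left[\bar{F}_3(\cdot,\Lambda)-\sum_j c_j(\bar{d}_j-\sum_k \lambda_{kj})\right]^+$, and monotonicity in the other coordinate shows this quantity is at least as large on the small side $(\mathcal{S},\Lambda)$ as on the large side $(\mathcal{T},\Lambda')$, which is exactly the supermodular inequality.

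The substantive step, and what I expect to be the main obstacle, is hybrid supermodularity in the discrete argument $E$. My plan is to combine two ingredients: (i) the supermodularity of each $\bar{\chi}_{kj}(E)$ from Lemma~\ref{lemma:F2 F3 supermodular}, which tells us that adding an edge $e$ to the smaller set $\mathcal{S}$ relaxes the LP data at least as much as adding $e$ to the larger set $\mathcal{T}$, and (ii) the convexity of each $c_j$. Formally, I would take optimal LP solutions $d^{\mathcal{S}\cup\{e\}}$ and $d^{\mathcal{T}}$, convex-combine them with $d^{\mathcal{S}}$ and $d^{\mathcal{T}\cup\{e\}}$ to build feasible candidates for the swapped parameter settings, and then apply Jensen's inequality to each $c_j$ to recover the desired supermodular bound. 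The delicate point is that the per-island balance constraint couples all load variables inside a single island, so the exchange must be carried out island-by-island; I plan to resolve this by performing the exchange within each connected component of $\mathcal{S}\cup\{e\}$ and then lifting the construction to $\mathcal{T}$ using the coarsening of the component partition guaranteed by $\mathcal{S}\subseteq\mathcal{T}$.
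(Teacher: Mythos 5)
Your proposal is built on a misreading of what $\bar{F}_3$ is. In the paper, $\bar{F}_3(E,d)=\min_{d\in\Lambda}\sum_{j\in\mathcal{L}}c_j(\bar{d}_j-\sum_k d_{kj})$ is a deliberate \emph{relaxation} in which the coupling constraints $d_{kj}\leq \bar{d}_j\chi_{kj}(E)$ and the per-island balance have been stripped out entirely and moved into the separate penalty terms $\bar{F}_4$ and $\bar{F}_5$. Consequently $\bar{F}_3$ has no dependence on $E$ at all, condition~1 of Definition~\ref{def:hybrid submodular} holds trivially (both marginals are zero), and the entire content of the lemma is the elementary fact that a function of the form $\Lambda\mapsto\min_{\lambda\in\Lambda}h(\lambda)$ over a finite set is nonincreasing and supermodular: the marginal decrease from adding $\lambda$ is $\left\{g(\Lambda)-h(\lambda)\right\}_+$, which can only shrink as $\Lambda$ grows. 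That is exactly the paper's one-line proof, and your treatment of the continuous argument is a correct (if slightly more verbose) version of the same observation. Note also that neither convexity nor even monotonicity of $c_j$ is needed for this step.

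The substantive part of your plan --- proving hybrid supermodularity in $E$ of the \emph{constrained} LP value function via supermodularity of $\bar{\chi}_{kj}(E)$, convex combinations of optimal solutions, and Jensen's inequality --- is therefore aimed at a different and strictly harder statement than the lemma, and it is also the step you have not actually carried out. As written it is a gap: optimal values of linear programs whose right-hand sides are supermodular set functions are not supermodular in general, and ``the smaller set relaxes the LP data at least as much'' does not transfer to the optimal value without additional structure; the island-by-island coupling you flag as ``delicate'' is precisely where such an argument tends to break. If you instead prove the lemma for the function the paper actually defines, none of this machinery is required; the $E$-dependent constraints are handled separately in Theorem~\ref{thm:supermodularity} through $\bar{F}_4$ and $\bar{F}_5$, using the supermodularity of $\bar{\chi}_{kj}$ and closure of supermodularity under $\{\cdot\}_+$ with a constant.
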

\begin{proof}
The lemma holds by the fact that any function of the form $f(\mathcal{S})=\min_{i\in\mathcal{S}}c_i$ is monotone nonincreasing and supermodular.
\end{proof}

In what follows, we focus on the constraints given in Eqn.  \eqref{eq:island balance-2}, \eqref{eq:island balance-3}, \eqref{eq:conservation}, \eqref{eq:island capacity}, and \eqref{eq:blackstart}. 
We relax the constraints by encoding them into  penalty functions, and show that the penalty functions are hybrid monotone and supermodular.

We first consider load-generation balance constraint \eqref{eq:island balance-2}. 
We define a penalty function $F_4(E,d)$ given as below
\begin{equation}\label{eq:penalty-balance}
    F_4(E,d) = \sum_{k=1}^m\left\{\sum_{j\in\mathcal{L}}d_{kj}+\sum_{j\in\mathcal{G}}\sum_{k'\neq k}\chi_{k'j}(E)\bar{g}_j-\sum_{j\in\mathcal{G}}\bar{g}_j\right\}_+
\end{equation}
to penalize the amount of load that exceeds the total power generation within each island, where $\{\cdot\}_+$ denotes $\max\{\cdot,0\}$.

We then consider constraint \eqref{eq:island balance-3}.
We relax constraint \eqref{eq:island balance-3}, by introducing a penalty function 
\begin{equation}\label{eq:penalty-load}
    F_5(E,d) = \sum_{k=1}^m\sum_{j\in\mathcal{L}}\left\{d_{kj} + \sum_{k'\neq k}\chi_{k'j}(E)\bar{d}_j-\bar{d}_j\right\}_+,
\end{equation}
which imposes a positive penalty when the load attached to each load bus $j$ exceeds the limit $\bar{d}_j$.

We next consider constraint \eqref{eq:island capacity}. 
We modify $\bar{G}$ by further introducing a sink node $s$ and a set of edges $\mathcal{A} = \{(j,s):j\in\mathcal{L}\}$.
Edge set $\mathcal{A}$ connects each load bus $j\in\mathcal{L}$ with the sink node $s$.
We further define the capacities for the edges in $\mathcal{A}$ to be infinity, and thus no capacity constraint can be violated by any $(j,s)\in\mathcal{A}$.
We then have the following result.
\begin{proposition}\label{prop:redirect overflow}
Consider the augmented graph $\Bar{G}$ with sink node $s$ and edge set $\mathcal{A}$. Given the load $d_j\leq\Bar{d}_j$ for all $j\in\mathcal{L}$, we have that there exists some post-islanding power flow satisfying $\Tilde{P}_{jj'}\leq \Bar{P}_{jj'}$ for all $(j,j')\in\Bar{\mathcal{E}}$ and conservation law given in Eqn. \eqref{eq:conservation}. 
\end{proposition}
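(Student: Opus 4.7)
The plan is to construct an explicit feasible power flow by exploiting the infinite capacity of every edge in $\mathcal{A}$. The guiding intuition is that the sink $s$ provides unbounded slack at every load bus, so internal transmission lines need not carry any flow at all; the sink can either supply a load bus or absorb excess from it. This reduces the existence claim to a one-line construction rather than a routing argument.

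Concretely, I would first set $\tilde{P}_{jj'} = 0$ for every original line $(j,j') \in \mathcal{E}$, which trivially satisfies the capacity constraint since $\bar{P}_{jj'} \geq 0$. Next, I would pick $g_j = 0$ at each generator bus $j \in \mathcal{G}$, so that the conservation law at $j$ reduces to $-g_j = 0$ and is satisfied. Finally, at each load bus $j \in \mathcal{L}$, I would set $\tilde{P}_{js} = -d_j$, so that the sink edge supplies exactly $d_j$ units to $j$; this is admissible because $(j,s)$ has infinite capacity, and the (augmented) conservation law at $j$ evaluates to $0 + (-d_j) + d_j - 0 = 0$. The resulting assignment lies in $\bar{\mathcal{E}}$ and respects every capacity bound by construction.

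The main subtlety, which I would make explicit in the write-up, is the intended reading of Eqn.~\eqref{eq:conservation} on the augmented graph: the sum over the neighbors of $j$ must be interpreted to include the sink edge $(j,s)$ whenever $j \in \mathcal{L}$, since otherwise the sink would play no role in conservation and the modification in $\mathcal{A}$ would not affect feasibility at all. Under this natural extension, the three-step construction above is a feasible flow and the proof is done. If instead one wanted to enforce conservation strictly on $\mathcal{E}$, then the genuinely hard step would be to show that generation can be routed to load buses within the original capacity limits inside every island; that case would require a flow-decomposition or max-flow/min-cut argument that invokes the connectivity of each island $I_k$ together with the load-generation balance in \eqref{eq:island balance-2}, and it is this step I expect to be the main obstacle if the stronger reading is what is meant.
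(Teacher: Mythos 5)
Your construction is correct under the natural reading of the proposition, but it takes a genuinely different route from the paper. The paper argues by repair: it starts from some flow $\hat{P}$ that satisfies conservation but violates a capacity bound, picks a load bus $j''\in\mathcal{V}_k\cap\mathcal{L}$ in the same island, truncates the flows along a connecting path $\eta$ to respect \eqref{eq:island capacity}, and dumps the residual $\sum_{(t,j')\in\eta}(\hat{P}_{tj'}-\Tilde{P}_{tj'})$ onto the unbounded sink edge $(j'',s)$. Your direct construction (zero flow on every line of $\mathcal{E}$, zero dispatch, $\Tilde{P}_{js}=-d_j$ on the sink edges) is more elementary and, frankly, more airtight: the paper's repair step does not track conservation at the \emph{intermediate} buses of $\eta$ (reducing the flow on one edge of the path unbalances both of its endpoints, not just the terminal one), and it silently assumes $\mathcal{V}_k\cap\mathcal{L}\neq\emptyset$ for the island containing the violation; your argument needs neither. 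What the paper's version buys, in exchange for this sloppiness, is a statement closer to the way the proposition is actually used — namely that an \emph{existing} dispatch can be made capacity-feasible by offloading only onto sink edges, so that $\sum_{j\in\mathcal{L}}|\Tilde{P}_{js}|$ measures the capacity violation of a meaningful flow rather than of the degenerate all-zero one. For the literal existence claim (equivalently, feasibility of the linear program in \eqref{eq:post-islanding power flow}), your construction suffices. You are also right to flag the two interpretive points on which both proofs tacitly rely: the conservation law \eqref{eq:conservation} must be read over $\Bar{\mathcal{E}}\cup\mathcal{A}$ so that the sink edge enters the balance at each load bus, and $g_j$ must be a dispatch variable in $[0,\Bar{g}_j]$ rather than a fixed datum — if $g_j$ were fixed at a generator bus (which has no sink edge), the claim could fail outright. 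Neither point is addressed in the paper's proof.
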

\begin{proof}
Suppose that there exists some bus $j\in\mathcal{V}$ such that constraints \eqref{eq:conservation} and \eqref{eq:island capacity} cannot be satisfied simultaneously. 
We denote the post-islanding power flow that creates capacity constraint violation as $\hat{P}_{jj'}$.
Without loss of generality, we consider that bus $j$ is inlcuded in island $I_k$.
We let $j''\in\mathcal{V}_k\cap\mathcal{L}$ be some load bus that is contained in island $I_k$. 
Since each island is connected, there must exist some path $\eta$ from bus $j$ to $j''$.
With a slight abuse of notation, we use $(t,j')\in \eta$ to represent that a transmission line $(t,j')$ is on path $\eta$.

By introducing the sink node $s$, we can ensure that constraints \eqref{eq:conservation} and \eqref{eq:island capacity} are satisfied by first choosing power flow $\tilde{P}_{jj'}$ satisfying  \eqref{eq:island capacity} for all $j'$ along the path from $j$ to $j''$, and then letting $\tilde{P}_{j''s}=\sum_{(t,j')\in\eta}(\hat{P}_{tj'}-\tilde{P}_{tj'})$ such that Eqn. \eqref{eq:conservation} is met.
Such power flow $\tilde{P}_{j''s}$ always exists since $\tilde{P}_{j''s}\in\mathbb{R}$.
\end{proof}

Proposition \ref{prop:redirect overflow} allows us to verify the satisfaction of constraint \eqref{eq:island capacity} by verifying whether $\sum_{j\in\mathcal{L}}|P_{js}|=0$ holds or not.
We observe that the sink node and edge set $\mathcal{A}$ do not exist in the power system $G$, and hence we can view all auxiliary transmission line being tripped by the islanding strategy.
Therefore, we have that the transmission line capacity constraint can be penalized by the total amount of power flow disruption over the edges in $\mathcal{A}$.
We modify function $\bar{F}_2(E)$ as 
\begin{equation}\label{eq:penalty-capacity}
    \bar{F}_2(E) = \sum_{k'\neq k}\sum_{(j,j')\in\mathcal{E}}\phi_{k,k',j,j'}(E)|P_{jj'}| + \beta\sum_{j\in\mathcal{L}}|\tilde{P}_{js}|
\end{equation}
to jointly penalize (i) the power flow disruption due to tripping transmission lines, and (ii) any violation of the capacity constraints in Eqn. \eqref{eq:island capacity},
where $\beta>0$ is a constant modeling the trade-off between (i) and (ii).

Consider constraint \eqref{eq:blackstart} for the blackstart generator allocation. 
We define a penalty function
\begin{multline}\label{eq:penalty-blackstart}
    F_6(E) = \Big\{\sum_{k'\neq k}\sum_{j\in \mathcal{V}_{k'}}\chi_{k'j}(E)\mathbb{I}_j+\sum_{k'\neq k}\sum_{j\in \mathcal{V}_{k'}}\chi_{k'j}(E)\\
    -\sum_{j\in\mathcal{V}}\mathbb{I}_j+1\Big\}_+,
\end{multline}
to penalize the scenarios where some island contains no blackstart generator.

Using the penalty functions given in Eqn. \eqref{eq:penalty-balance} to \eqref{eq:penalty-blackstart}, we reformulate the constrained matroid optimization program in Eqn. \eqref{eq:matroid optimization} as the following unconstrained program
\begin{subequations}\label{eq:unconstrained matroid optimization}
\begin{align}
    \min_{E,d} \quad&\alpha_1\bar{F}_1(E)+\alpha_2\bar{F}_2(E) + \alpha_3\bar{F}_3(E,d) + \alpha_4F_4(E,d)\nonumber\\
    &\quad\quad+\alpha_5F_5(E,d) + \alpha_6F_6(E)\label{eq:unconstrained matroid optimization obj}\\
    \st \quad & (E,d)\in\mathcal{B}(\bar{\mathcal{M}})\label{eq:unconstrained matroid opt constraint}
\end{align}
\end{subequations}
where parameters $\alpha_1$ to $\alpha_6$ are positive constants modeling the trade-off among metrics $\bar{F}_1(E)$, $\bar{F}_2(E)$, $\bar{F}_3(E,d)$ as well the constraints in Eqn. \eqref{eq:island balance-2}, \eqref{eq:island balance-3}, \eqref{eq:island capacity}, and \eqref{eq:blackstart}.

In what follows, we convert the matroid optimization program in Eqn. \eqref{eq:unconstrained matroid optimization} to an equivalent hybrid submodular optimization problem.
We define 
\begin{align*}
    \bar{F}_4(E,d) &= \sum_{k=1}^m\Big\{\min_{d\in\Lambda}\sum_{j\in\mathcal{L}}d_{kj}
    +\sum_{j\in\mathcal{G}}\sum_{k'\neq k}\chi_{k'j}(E)\bar{g}_j-\sum_{j\in\mathcal{G}}\bar{g}_j\Big\}_+\\
    \bar{F}_5(E,d) &=\sum_{k=1}^m\sum_{j\in\mathcal{L}}\Big\{\min_{d\in\Lambda}d_{kj}
    +\sum_{k'\neq k}\chi_{k'j}(E)\bar{d}_j-\bar{d}_j\Big\}_+\\
    \bar{F}_6(E) &=\sum_{k=1}^m\Big\{\sum_{k'\neq k}\sum_{j\in \mathcal{V}_{k'}}\chi_{k'j}(E)\mathbb{I}_j\\
    &\quad\quad\quad\quad+\sum_{k'\neq k}\sum_{j\in \mathcal{V}_{k'}}\chi_{k'j}(E)
    -\sum_{j\in\mathcal{V}}\mathbb{I}_j+1\Big\}_+\\
    \bar{F}(E,\Lambda) &= \alpha_1\bar{F}_1(E)+\alpha_2\bar{F}_2(E)+\alpha_3\bar{F}_3(E,d)\\
    &\quad\quad\quad\quad+\alpha_4\bar{F}_4(E,d) +\alpha_5\bar{F}_6(E,d)+\alpha_6\bar{F}_6(E).
\end{align*}
Given the definition of function $\bar{F}(E,\Lambda)$, we reformulate the optimization problem in Eqn. \eqref{eq:unconstrained matroid optimization} as 
\begin{subequations}\label{eq:hybrid submodular matroid opt}
\begin{align}
    \min \quad&\bar{F}(E,\Lambda)\\
    \st \quad&(E,\Lambda)\in\mathcal{B}(\bar{\mathcal{M}})
\end{align}
\end{subequations}
The equivalence between optimization programs \eqref{eq:hybrid submodular matroid opt} and \eqref{eq:unconstrained matroid optimization} is established as follows.
\begin{proposition}
An islanding strategy $(E,d)$ is an optimal solution to Eqn. \eqref{eq:unconstrained matroid optimization} if and only if $(E,\Lambda)$ is an optimal solution to Eqn. \eqref{eq:hybrid submodular matroid opt}, where $\Lambda = \{d\}$.
Furthermore, $\bar{F}(E,\Lambda) = F(E,d)$.
\end{proposition}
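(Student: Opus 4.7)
The plan is to prove the proposition by establishing pointwise equality of the objectives under the identification $\Lambda=\{d\}$, and then lifting this pointwise equality to an equivalence of optimizers by noting that the feasible regions of the two programs are in bijection.

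First I would verify the functional identity $\bar{F}(E,\Lambda)=F(E,d)$ when $\Lambda=\{d\}$. This is a term-by-term check. The terms $\bar{F}_1(E)$ and $\bar{F}_2(E)$ do not depend on the continuous variable at all, so they appear identically in both objectives. For $\bar{F}_3(E,d)$, $\bar{F}_4(E,d)$, and $\bar{F}_5(E,d)$, each barred function is obtained from its unbarred counterpart by replacing $d_{kj}$ (resp.\ $\sum_{k=1}^m d_{kj}$) with $\min_{d\in\Lambda}$ of the same expression; when $\Lambda=\{d\}$ is a singleton, the minimum is attained at the unique element and reduces to the original expression. Thus $\bar{F}_3(E,\{d\})=F_3(E,d)$, $\bar{F}_4(E,\{d\})=F_4(E,d)$, and $\bar{F}_5(E,\{d\})=F_5(E,d)$. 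The blackstart term $\bar{F}_6(E)$ is purely discrete and matches $F_6(E)$ by inspection. Summing with weights $\alpha_1,\dots,\alpha_6$ yields the claimed equality.

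Next I would establish the bijection between feasible sets. By construction of the hybrid matroid $\bar{\mathcal{M}}=(\bar{\mathcal{E}},\bar{\mathcal{D}},\bar{\mathcal{I}})$ described just after Lemma~\ref{lemma:spanning tree}, a basis $(E,\Lambda)\in\mathcal{B}(\bar{\mathcal{M}})$ requires $E\in\mathcal{B}(\mathcal{M}_{\bar{G}})$ together with $|\Lambda|=1$, i.e.\ $\Lambda=\{d\}$ for a single $d\in\bar{\mathcal{D}}$. Conversely, any feasible $(E,d)$ for \eqref{eq:unconstrained matroid optimization} gives rise to a basis $(E,\{d\})$ of $\bar{\mathcal{M}}$. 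This gives a one-to-one correspondence $(E,d)\leftrightarrow(E,\{d\})$ between the feasible regions of \eqref{eq:unconstrained matroid optimization} and \eqref{eq:hybrid submodular matroid opt}.

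Finally, combining the two observations finishes the proof. If $(E^\star,d^\star)$ is optimal for \eqref{eq:unconstrained matroid optimization}, then for any $(E,\Lambda)=(E,\{d\})\in\mathcal{B}(\bar{\mathcal{M}})$ we have $\bar{F}(E,\Lambda)=F(E,d)\geq F(E^\star,d^\star)=\bar{F}(E^\star,\{d^\star\})$, so $(E^\star,\{d^\star\})$ is optimal for \eqref{eq:hybrid submodular matroid opt}; the converse direction is symmetric under the same bijection. The hardest part will not be any deep argument but simply keeping the bookkeeping straight: in particular, verifying that the inner minimum over a singleton $\Lambda$ really does collapse to the original quantity in every term, and confirming that a basis of $\bar{\mathcal{M}}$ pins $|\Lambda|$ to exactly $1$ so that the singleton identification is canonical rather than a relaxation.
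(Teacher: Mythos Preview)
Your proposal is correct and follows essentially the same approach as the paper: establish the bijection $(E,d)\leftrightarrow(E,\{d\})$ between feasible sets and verify $\bar{F}(E,\Lambda)=F(E,d)$ term by term, noting that the inner $\min_{d\in\Lambda}$ collapses over a singleton. The paper's own proof is considerably terser---it simply asserts the one-to-one correspondence and tells the reader to check the terms---so your expanded bookkeeping of $\bar{F}_3,\bar{F}_4,\bar{F}_5$ and the basis condition $|\Lambda|=1$ only makes explicit what the paper leaves implicit.
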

\begin{proof}
The equivalence between $(E,d)$ and $(E,\Lambda)$ is established by $\Lambda = \{d\}$.
One can verify that $\bar{F}(E,\Lambda)=F(E,d)$ when $\Lambda=\{d\}$ by using the one to one correspondence between each term in $\bar{F}(E,\Lambda)$ and Eqn. \eqref{eq:unconstrained matroid optimization obj}.
\end{proof}

We further establish the following monotonicity and supermodularity properties for $\bar{F}(E,\Lambda)$ as given below.
\begin{theorem}\label{thm:supermodularity}
The function $\bar{F}(E,\Lambda)$ is hybrid monotone nonincreasing and supermodular in $(E,\Lambda)$.
\end{theorem}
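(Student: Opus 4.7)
The plan is to write $\bar{F}(E,\Lambda)=\sum_{i=1}^{6}\alpha_i\bar{F}_i$ as a nonnegative linear combination, use the fact that hybrid monotone nonincreasing-ness and hybrid supermodularity are each preserved under nonnegative linear combinations, and then verify each $\bar{F}_i$ separately. The terms $\bar{F}_1$ and $\bar{F}_2$ depend only on $E$, so Lemma~\ref{lemma:F2 F3 supermodular} immediately delivers what is needed on $\mathcal{B}(\mathcal{M}_{\bar{G}})$ and the hybrid property in the continuous coordinate is vacuous; the extra capacity penalty $\beta\sum_{j\in\mathcal{L}}|\tilde{P}_{js}|$ folded into $\bar{F}_2$ by \eqref{eq:penalty-capacity} fits the same framework, since the auxiliary edges in $\mathcal{A}$ act as tripped edges and contribute a power-flow-disruption-type expression already covered by Lemma~\ref{lemma:F2 F3 supermodular}. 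The term $\bar{F}_3$ is handled directly by Lemma~\ref{lemma:F-1 supermodular}.

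The nontrivial work lies in the penalty terms $\bar{F}_4,\bar{F}_5,\bar{F}_6$, each of which has the outer form $\sum_k\{h_k(E,\Lambda)\}_+$. I would first argue that every inner function $h_k$ is itself hybrid monotone nonincreasing and supermodular in $(E,\Lambda)$. Reading off the definitions, $h_k$ is a nonnegative combination of three kinds of building blocks: the terms $\sum_{k'\neq k}\chi_{k'j}(E)$ (with positive constant multipliers $\bar{g}_j$, $\bar{d}_j$, $\mathbb{I}_j$, or $1$), which on $\mathcal{B}(\mathcal{M}_{\bar{G}})$ agree with $\sum_{k'\neq k}\bar{\chi}_{k'j}(E)$ and are nonincreasing supermodular in $E$ by Lemma~\ref{lemma:F2 F3 supermodular}; the terms $\min_{d\in\Lambda}d_{kj}$ appearing in $\bar{F}_4$ and $\bar{F}_5$, which are hybrid monotone nonincreasing supermodular via the ``min-over-set'' argument used in Lemma~\ref{lemma:F-1 supermodular}; and constants. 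Closure under nonnegative combinations then gives hybrid monotone nonincreasing supermodularity of each $h_k$.

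It remains to verify that $g\mapsto\{g\}_+$ preserves hybrid monotone nonincreasing-ness (immediate, since $\max(\cdot,0)$ is nondecreasing) and hybrid supermodularity. For the latter, fix $\mathcal{S}\subseteq\mathcal{T}$, $\Lambda\subseteq\Lambda'$, an element $v$ to be added (in either the discrete or continuous coordinate), and write $a,b,c,d$ for the four values of $g$ at $(\mathcal{S},\Lambda)$, $(\mathcal{S}\cup\{v\},\Lambda)$, $(\mathcal{T},\Lambda')$, $(\mathcal{T}\cup\{v\},\Lambda')$ (analogously in the continuous coordinate). Nonincreasing monotonicity of $g$ gives $a\geq b\geq d$ and $a\geq c\geq d$, while supermodularity of $g$ gives $a+d\geq b+c$. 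A short case split on where $0$ lies in this chain (trivial when $d\geq 0$ so all four are nonnegative, and when $a\leq 0$ so all four are clipped to $0$; in the remaining case $d<0\leq a$ one splits on the signs of $b$ and $c$, using $a+d\geq b+c$ together with $d<0$ in the subcase where both $b,c\geq 0$) establishes $\{a\}_+ + \{d\}_+ \geq \{b\}_+ + \{c\}_+$, which is precisely supermodularity of $\{g\}_+$.

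The main obstacle is this last step: $\{\cdot\}_+$ does not preserve supermodularity in general, and it is only the nonincreasing-ness inherited from the carefully chosen building blocks (the $\chi_{k'j}(E)$ with $k'\neq k$ and the min-over-set terms) that makes the four-value case analysis go through. I would therefore state the ``$\{\cdot\}_+$ preserves hybrid monotone nonincreasing supermodularity'' lemma explicitly and invoke it once at each of $\bar{F}_4,\bar{F}_5,\bar{F}_6$, so that the monotonicity hypothesis is visible every time it is used.
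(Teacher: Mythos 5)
Your proposal is correct and follows essentially the same route as the paper: a term-by-term decomposition invoking Lemmas~\ref{lemma:F2 F3 supermodular} and~\ref{lemma:F-1 supermodular} for $\bar{F}_1,\bar{F}_2,\bar{F}_3$, hybrid monotone nonincreasing supermodularity of the inner expressions of $\bar{F}_4,\bar{F}_5,\bar{F}_6$, the fact that $\{\cdot\}_+$ preserves these properties, and closure under nonnegative linear combinations. The only difference is that the paper merely asserts that $\max\{f,c\}$ preserves nonincreasing supermodularity, whereas you supply the four-value case analysis proving it and correctly flag that the monotonicity hypothesis is what makes this truncation step valid.
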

\begin{proof}
We prove the theorem by showing that each term in $\bar{F}(E,\Lambda)$ is hybrid monotone nonincreasing and supermodular.

The monotonicity and supermodularity of functions $\bar{F}_1(E)$ and $\bar{F}_2(E)$ hold by Lemma \ref{lemma:F2 F3 supermodular}.
The monotonicity and supermodularity of $\alpha_3\bar{F}_3(E,d)$ follows from Lemma \ref{lemma:F-1 supermodular}.

Using Lemma \ref{lemma:F2 F3 supermodular} and Definition \ref{def:hybrid submodular}, we have that $\min_{d\in\Lambda}\sum_{j\in\mathcal{L}}d_{kj}+\sum_{j\in\mathcal{G}}\sum_{k'\neq k}\chi_{k'j}(E)\bar{g}_j-\sum_{j\in\mathcal{G}}\bar{g}_j$ is hybrid monotone nonincreasing and supermodular. 
In addition, we have that function $\max\{f(S),c\}$ is monotone nonincreasing and supermodular for any constant $c$ if $f(S)$ is monotone nonincreasing and supermodular. 
We thus have that 
\begin{equation*}
    \alpha_4\sum_{k=1}^m\Big\{\min_{d\in\Lambda}\sum_{j\in\mathcal{L}}d_{kj}+\sum_{j\in\mathcal{G}}\sum_{k'\neq k}\chi_{k'j}(E)\bar{g}_j-\sum_{j\in\mathcal{G}}\bar{g}_j\Big\}_+
\end{equation*}
is monotone nonincreasing and supermodular. 
Similar arguments can be used to show that the term 
\begin{equation*}
    \alpha_5\sum_{k=1}^m\sum_{j\in\mathcal{L}}\Big\{\min_{d\in\Lambda}d_{kj}+\sum_{k'\neq k}\chi_{k'j}(E)\bar{d}_j-\bar{d}_j\Big\}_+
\end{equation*}
is hybrid monotone nonincreasing and supermodular. 


The hybrid monotonicity and supermodularity of the term 
\begin{multline*}
    \alpha_6\sum_{k=1}^m\Big\{\sum_{k'\neq k}\sum_{j\in \mathcal{V}_{k'}}\chi_{k'j}(E)\mathbb{I}_j+\sum_{k'\neq k}\sum_{j\in \mathcal{V}_{k'}}\chi_{k'j}(E)
    \\
    -\sum_{j\in\mathcal{V}}\mathbb{I}_j+1\Big\}_+
\end{multline*}
follows by Lemma \ref{lemma:F2 F3 supermodular}.

Applying the argument that linear combinations of monotone and supermodular functions with positive weights are still monotone and supermodular completes the proof.
\end{proof}

  \begin{center}
  	\begin{algorithm}[!htp]
  		\caption{Local search algorithm for controlled islanding}
  		\label{algo:islanding}
  		\begin{algorithmic}[1]

		\State Set $\epsilon \in [0, 0.5)$
		\State Initialize $(E,d)$ to define a valid islanding strategy\label{line:init}
		\State $found \leftarrow 1$
		\While{$found ==1$}
		\State $found \leftarrow 0$
       \For{$(j_{1},j_{1}^{\prime}) \in \mathcal{E}\setminus E, \ (j_{2},j_{2}^{\prime}) \in E$}
       \If{$E \cup \{(j_{1},j_{1}^{\prime})\} \setminus \{(j_{2},j_{2}^{\prime})\} \in \mathcal{B}(\mathcal{M}_{0})$}\label{line:update}
		\State Compute $d'$ and $\tilde{P}_{jj'}$ using Eqn. \eqref{eq:post-islanding power flow} 
       \If{$\overline{F}(E \cup \{(j_{1},j_{1}^{\prime})\} \setminus \{(j_{2},j_{2}^{\prime})\},\{d'\}) < (1-\epsilon)\bar{F}(E,\{d\})$}\label{line:eval start}
        \State $E \leftarrow E \cup \{(j_{1},j_{1}^{\prime})\} \setminus \{(j_{2},j_{2}^{\prime})\}$, $d \leftarrow d^{\prime}$
        \State $found \leftarrow 1$
        \State \textbf{Break}\label{line:break}
        \EndIf\label{line:eval end}
        \EndIf
        \EndFor
        \EndWhile
        \State \Return $E,d$
  		\end{algorithmic}
  	\end{algorithm}
  \end{center}

The monotonicity and supermodularity properties established by Theorem \ref{thm:supermodularity} allow us to develop an efficient local search solution algorithm, as shown in Algorithm \ref{algo:islanding}, with provable optimality guarantee.
The algorithm first initializes an islanding strategy $(E,d)$ in line \ref{line:init}.
Note that the initial set $E$ should not include any transmission line from the auxiliary transmission line set $\mathcal{A}$.
The algorithm then proceeds in an iterative manner.
At each iteration, the algorithm generates a new islanding strategy by including an unselected transmission line $(j_1,j_1')\in\mathcal{E}\setminus E$ and excluding a selected transmission line $(j_2,j_2')$, as shown in line \ref{line:update}.
Note that transmission line $(j_1,j_1')$ will connect two disjoint islands, and $(j_2,j_2')$ will be used to ensure the $m$ islands are generated with all reference generators being not connected with each other.
Then the algorithm computes the continuous variable $d'$ and $P$ by solving the following linear program
\begin{subequations}\label{eq:post-islanding power flow}
\begin{align}
    \min_{d',\tilde{P}}\quad&F_3(E,d')+\gamma\sum_{j\in\mathcal{V}}|P_{js}|\\
    \st\quad&\text{Eqn. \eqref{eq:island balance-2}, \eqref{eq:island balance-3}, \eqref{eq:conservation} and \eqref{eq:island capacity}}
\end{align}
\end{subequations}
where $\gamma>0$ captures the weight assigned to load shedding cost and violations of capacity constraints.
Line \ref{line:eval start} to line \ref{line:eval end} evaluates the performance of the islanding strategy $(E,d')$ using function $\bar{F}(E,\Lambda)$ with $\Lambda=\{d'\}$. 
The algorithm finally decides whether the islanding strategy should be updated (if the condition in line \ref{line:eval start} holds) or not (line \ref{line:break}).

We conclude this section by presenting the optimality guarantee provided by Algorithm \ref{algo:islanding}.
\begin{theorem}
Let $Z$ be a sufficiently large positive number such that $Z-\bar{F}(E,\Lambda)\geq 0$ for all $(E,\Lambda)$ where $|\Lambda|\leq 1$.
As parameter $\epsilon\rightarrow 0$, Algorithm \ref{algo:islanding} returns an islanding strategy $(E,d)$ such that 
\begin{equation*}
    Z-\bar{F}(E,\Lambda)\geq \frac{1}{2}(Z-\bar{F}(E',\Lambda'))
\end{equation*}
for all $(E',\Lambda')\in\mathcal{B}(\bar{\mathcal{M}})$, where $\Lambda = \{d\}$ and $|\Lambda'|\leq 1$.
\end{theorem}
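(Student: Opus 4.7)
The plan is to reduce the claim to a direct application of Proposition \ref{prop:optimality}, after shifting the objective so that a nonincreasing supermodular function becomes a nondecreasing submodular function on which the $\tfrac{1}{2}$-optimality bound is stated. Specifically, I would define
\begin{equation*}
   g(E,\Lambda) := Z - \bar{F}(E,\Lambda),
\end{equation*}
and first verify that $g$ satisfies the hypotheses of Proposition \ref{prop:optimality} on the hybrid matroid $\bar{\mathcal{M}}$. Nonnegativity of $g$ follows from the choice of $Z$. Hybrid monotone nondecreasingness and hybrid submodularity follow because Theorem \ref{thm:supermodularity} shows $\bar{F}$ is hybrid monotone nonincreasing and hybrid supermodular, and negating flips both properties while adding a constant preserves them.

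Next I would translate the termination condition of Algorithm \ref{algo:islanding} into the local-optimality hypothesis required by Proposition \ref{prop:optimality}. When the algorithm terminates, for every swap $(j_1,j_1')\in\mathcal{E}\setminus E$ and $(j_2,j_2')\in E$ such that $E\cup\{(j_1,j_1')\}\setminus\{(j_2,j_2')\}\in\mathcal{B}(\mathcal{M}_0)$, the evaluation check in line \ref{line:eval start} fails, giving
\begin{equation*}
   \bar{F}(E\cup\{(j_1,j_1')\}\setminus\{(j_2,j_2')\},\{d'\}) \geq (1-\epsilon)\bar{F}(E,\{d\}).
\end{equation*}
Letting $\epsilon\to 0$ and rewriting in terms of $g$ yields
\begin{equation*}
   g(E,\{d\}) \geq g\bigl(E\cup\{(j_1,j_1')\}\setminus\{(j_2,j_2')\},\{d'\}\bigr).
\end{equation*}

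The main obstacle is the mismatch between Algorithm \ref{algo:islanding} (which swaps one edge and then recomputes a single LP-optimal continuous vector $d'$) and Proposition \ref{prop:optimality} (which requires the inequality for an \emph{arbitrary} replacement $\lambda'\notin \Lambda$). To bridge this, I would exploit the LP-optimality of $d'$ produced by \eqref{eq:post-islanding power flow}: for any other feasible continuous vector $d''$ satisfying the constraints, we have $\bar{F}(E_{\text{new}},\{d''\}) \geq \bar{F}(E_{\text{new}},\{d'\})$, and therefore $g(E_{\text{new}},\{d''\}) \leq g(E_{\text{new}},\{d'\})$. Chaining with the termination inequality gives
\begin{equation*}
   g(E,\{d\}) \geq g\bigl(E\cup\{(j_1,j_1')\}\setminus\{(j_2,j_2')\},\{d''\}\bigr)
\end{equation*}
for every admissible $d''$, which is precisely the single-swap local-optimality hypothesis of Proposition \ref{prop:optimality} in the case $|\Lambda|,|\Lambda'|\leq 1$.

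Finally, I would invoke Proposition \ref{prop:optimality} applied to $g$ and $\bar{\mathcal{M}}$ to conclude $g(E,\Lambda) \geq \tfrac{1}{2}\,g(E',\Lambda')$ for every $(E',\Lambda')\in\mathcal{B}(\bar{\mathcal{M}})$ with $|\Lambda'|\leq 1$. Substituting back the definition of $g$ delivers the stated bound $Z-\bar{F}(E,\Lambda)\geq \tfrac{1}{2}\bigl(Z-\bar{F}(E',\Lambda')\bigr)$. A minor secondary step is to confirm that the output $(E,\{d\})$ of the algorithm indeed lies in $\mathcal{B}(\bar{\mathcal{M}})$: the graphic-matroid swap check in line \ref{line:update} together with Lemma \ref{lemma:spanning tree} keeps $E\cup\{(a,r_k)\}$ a spanning tree of $\bar{G}$ throughout, and $|\{d\}|=1$ handles the continuous component, so membership in $\mathcal{B}(\bar{\mathcal{M}})$ is maintained as an invariant from the initialization in line \ref{line:init}.
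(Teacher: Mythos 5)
Your proposal is correct and follows essentially the same route as the paper: shift $\bar{F}$ by $Z$ to obtain a hybrid monotone nondecreasing submodular function via Theorem \ref{thm:supermodularity}, read the termination condition of Algorithm \ref{algo:islanding} (with $\epsilon\to 0$) as the local-optimality hypothesis, and invoke Proposition \ref{prop:optimality}. In fact the paper's own proof is only these three steps stated tersely; your additional care in bridging the algorithm's single LP-computed $d'$ to the arbitrary replacement $\lambda'$ required by the proposition, and in checking that the iterate stays in $\mathcal{B}(\bar{\mathcal{M}})$, fills in details the paper leaves implicit.
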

\begin{proof}
By Theorem \ref{thm:supermodularity}, we have that $\bar{F}(E,\Lambda)$ is hybrid supermodular.
Therefore, $Z-\bar{F}(E,\Lambda)$ is hybrid submodular.
By applying Proposition \ref{prop:optimality} yields the theorem.
\end{proof}

Denote $E_{t}$ and $d_t$ as the set of edges and load shedding obtained after $t$ iterations by using Algorithm \ref{algo:islanding}. 
We have $\overline{F}(E_{t},d_t) < (1-\epsilon)^{t}\overline{F}(E_{0},d_0)$. 
Hence we have that the algorithm terminates within $\lceil \frac{\log{\left\{\frac{\overline{F}(E_{t})}{\overline{F}(E_{0})}\right\}}}{\log{(1-\epsilon)}}\rceil$ iterations and each iteration has worst case $O(|\mathcal{E}|^{2}\xi)$ complexity, yielding an upper bound of $$O\left(\lceil \frac{\log{\left\{\frac{\overline{F}(E_{t})}{\overline{F}(E_{0})}\right\}}}{\log{(1-\epsilon)}}|\mathcal{E}|^{2}\xi\rceil\right)$$ on the complexity, where $\xi$ is the computational complexity of solving the linear program in Eqn. (18).

\section{Numerical Study}\label{sec:experiment}

This section presents three case studies to compare our proposed approach and the state-of-the-art MILP-based solution.
We first present necessary background on the MILP formulation. 
We then present a comparison of results from the MILP baseline and our approach on the IEEE 118-bus system \cite{118bus}, IEEE 300-bus system \cite{300bus}, ActivSg 500-bus system \cite{birchfield2016grid}, and Polish 2383-bus system \cite{zimmerman2010matpower}.
We consider that at time $t=0$, buses 10, 69, 18, and 10 incur three-phase faults for IEEE 118-bus, 300-bus, ActivSg 500-bus, and Polish 2383 test cases, respectively, which necessitates controlled islanding.
The fault is cleared at time 0.4s.

\subsection{Baseline: Mixed Integer Linear Program}
This subsection introduces the baseline approach that utilizes mixed-integer linear program (MILP) to solve the islanding problem. 
The baseline approach is developed based on \cite{kyriacou2017controlled,patsakis2019strong}.

The baseline approach takes the coherent generator groups, denoted as $\mathcal{C}_1,\ldots,\mathcal{C}_m$, as input, and computes the load shedding at each load bus, the transmission lines to trip to partition the power system, and the post-islanding power flow.
We summarize the notations and variables used in the MILP in Table \ref{tab:variable}.
The MILP given as below minimizes the costs incurred by load shedding and power flow disruption.
\begin{subequations}\label{eq:MILP}
\begin{align}
    \min\quad &\alpha_2F_2(I_1,\ldots,I_m)+\alpha_3F_3(I_1,\ldots,I_m)\label{eq:MILP obj}\\
    \mbox{s.t.}\quad&\sum_{j\in \mathcal{L}}d_{kj}\leq \sum_{j\in\mathcal{G}}\Bar{g}_jx_{kj},~\forall k=1,\ldots,m,~\forall j\in \mathcal{V}\label{eq:MILP c1} \\
    &0\leq d_{kj}\leq \Bar{d}_jx_{kj},~\forall k=1,\ldots,m,~\forall j\in \mathcal{V}\label{eq:MILP c2}\\
    &x_{kj} = v_{kj'},~\forall j,j'\in\mathcal{C}_k\label{eq:MILP c3}\\
    & w_{k,j,j'}\in\{0,1\},~\forall k=1,\ldots,m,\forall (j,j')\in\mathcal{E}\label{eq:MILP c4}\\
    & x_{kj}\in\{0,1\},~\forall k=1,\ldots,m,\forall j\in\mathcal{V}\label{eq:MILP c5}\\
    & z_{jj'}\in\{0,1\},~\forall (j,j')\in\mathcal{E}\label{eq:MILP c6}\\
    &\sum_{k=1}^mx_{kj}\leq 1,~\forall j\in \mathcal{V}\label{eq:MILP c7}\\
    & w_{k,j,j'}\leq x_{kj}, ~w_{k,j,j'}\leq x_{kj'},~\forall k, (j,j')\in\mathcal{E}\label{eq:MILP c8}\\
    & z_{jj'} = \sum_{k=1}^m w_{k,j,j'}, z_{jj'}=z_{j'j},~\forall (j,j')\in\mathcal{E}\label{eq:MILP c9}\\
    & 0\leq l_{k,j,j'}\leq Zz_{jj'},~\forall (j,j')\in \mathcal{E}\label{eq:MILP c11}\\
    &v_{kj}\sum_{j\in\mathcal{V}}x_{kj}-x_{kj}+\sum_{j\in\mathcal{N}(j')}f_{k,j,j'} = \sum_{j\in\mathcal{N}(j')}f_{k,j',j},\nonumber\\
    &\quad\quad\quad\quad\quad\quad\forall j\in\mathcal{V},k=1,\ldots,m\label{eq:MILP c12}\\
    & \Tilde{P}_{jj'}\leq \Bar{P}_{jj'}z_{jj'},~\forall (j,j')\in\mathcal{E}\label{eq:MILP c13}\\
    &\sum_{j\in\mathcal{N}(j')}\Tilde{P}_{jj'}+g_{j'}-d_{j'}=0,~\forall j'\in\mathcal{V}\label{eq:MILP c14}\\
    &\sum_jx_{kj}\mathbb{I}_j\geq 1,~\forall k=1,\ldots,m\label{eq:MILP c15}
\end{align}
\end{subequations}
In order to retain linearity, we note from Eqn. \eqref{eq:MILP obj} that the MILP cannot optimize over generator coherency in Eqn. \eqref{eq:coherence} as our proposed approach does.
In the MILP, function $F_3(I_1,\ldots,I_m)$ can be represented as $F_3(I_1,\ldots,I_m) = \sum_{(j,j')\in\mathcal{E}}(1-z_{jj'})\frac{|P_{jj'}| + |P_{j'j}|}{2}$.
Constraint \eqref{eq:MILP c1} and \eqref{eq:MILP c2} require the admissible loads at the each load bus to satisfy the load-generation balance.
Constraint \eqref{eq:MILP c3} specifies the reference generator of each island $I_k$.
Constraints \eqref{eq:MILP c4} to \eqref{eq:MILP c9} ensure that variables $x$, $w$, and $z$ define a set of disjoint islands.
Constrains \eqref{eq:MILP c11} and \eqref{eq:MILP c12} define an auxiliary flow $l_{k,j,j'}$ initiating from the reference generator to ensure the flow conservation law and connectivity within each island.
Constraints \eqref{eq:MILP c13} and \eqref{eq:MILP c14} define the post-islanding power flow $\Tilde{P}_{jj'}$ on each transmission line.
Constraint \eqref{eq:MILP c15} assigns the blackstart generators to ensure efficient restoration.
The MILP formulated in Eqn. \eqref{eq:MILP} involves $3m|\mathcal{V}|+2m|\mathcal{E}|+2|\mathcal{E}|$ number of decision variables and $(4m+2)|\mathcal{V}|+(3m+7)|\mathcal{E}|+m$ number of constraints.

\begin{table*}[htb]
\centering
\caption{This table contains the set of variables used in the MILP formulation. The first column gives the notation of each variable. The last column presents the interpretation of each variable.}
\begin{tabular}{|c|c|c|} 
 \hline
 \textbf{Notation} & \textbf{Type} & \textbf{Interpretation} \\ [0.5ex] 
 \hline
 $x_{kj}$ & Binary & Whether bus $j$ is contained in island $I_k$ ($x_{kj}=1$) or not ($x_{kj}=0$) \\
 \hline
 $w_{k,j,j'}$ & Binary & Whether transmission line $(j,j')\in\mathcal{E}$ is contained in island $I_k$ ($z_{k,j,j'}=1$) or not ($z_{k,j,j'}=0$) \\
 \hline
 $z_{jj'}$ & Binary & Whether transmission line $(j,j')$ will be tripped ($z_{jj'}=0$) or not ($z_{jj'}=1$) \\
 \hline
 $v_{kj}$ & Binary & Indicator for the reference generator $j$ in island $I_k$\\
 \hline
 $l_{k,j,j'}$ & Nonnegative real & Auxiliary flow on transmission line $(j,j')$ when included in island $I_k$ \\  
 \hline
 $Z$ & Positive real & A sufficiently large positive number\\
 \hline
 $\mathcal{N}(j)$ & Set & The neighboring buses $\{j':(j',j)\in\mathcal{E}\}$ of $j$\\
 \hline
\end{tabular}
\label{tab:variable}
\end{table*}

We solve the MILP formulated in \eqref{eq:MILP} by using two solvers: intlinprog provided by Matlab \cite{intlinprog} and mixed-integer program solver provided by Gurobi \cite{gurobi}. We refer to the former as MILP-M, while the latter as MILP-G in the rest of this section.
Since MILP-G and MILP-M cannot optimize the generator coherency, we compute the generator coherency for MILP-M and MILP-G beforehand for the purpose of comparison.

\begin{figure*}[!ht]
  \centering
    \subfloat[IEEE 300-bus, $m=3$ using MILP.]{\includegraphics[width=0.495\textwidth]{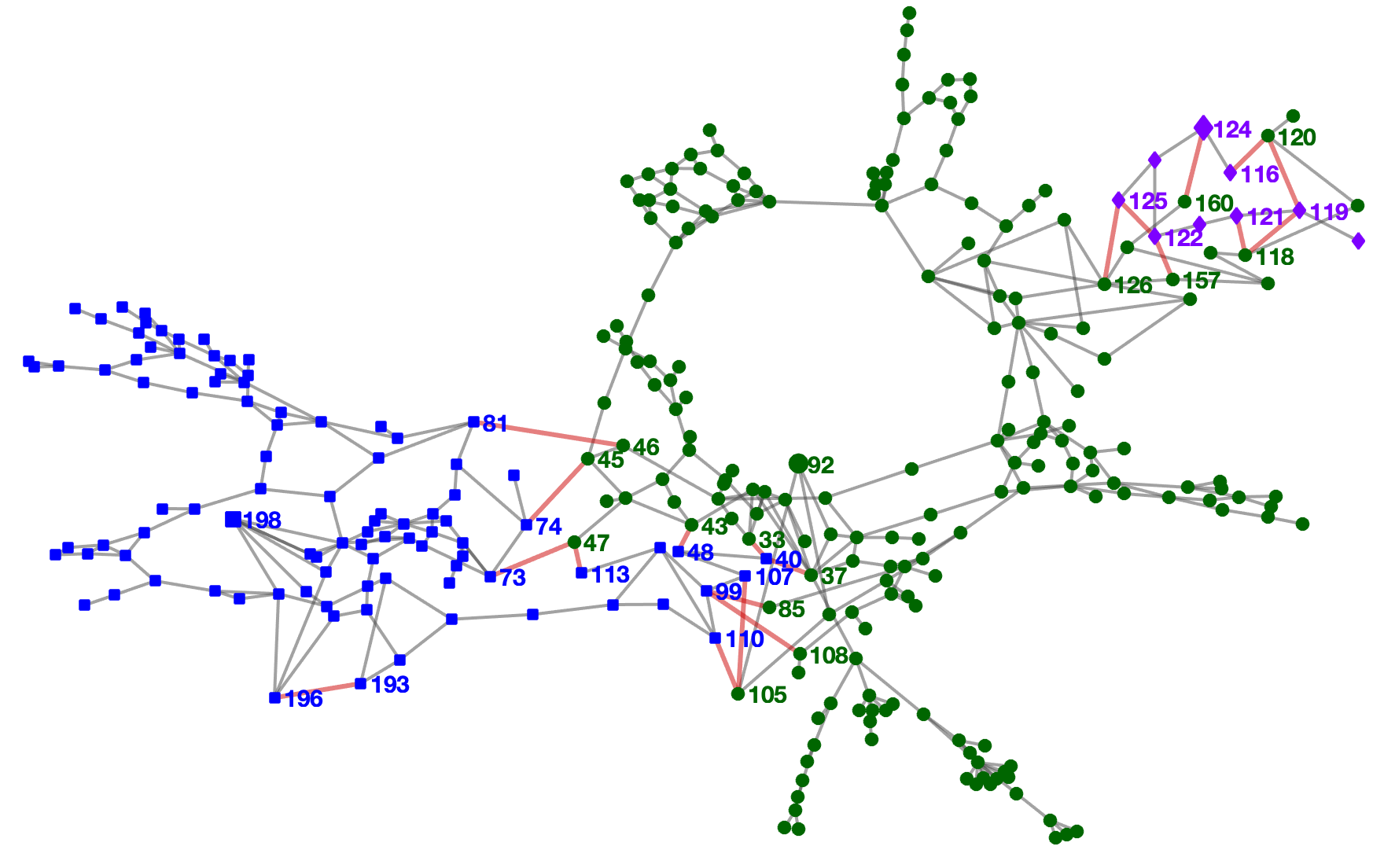}\label{fig:f3}}
  \hfill
  \subfloat[IEEE 300-bus, $m=3$ using Algorithm~1.]{\includegraphics[width=0.495\textwidth]{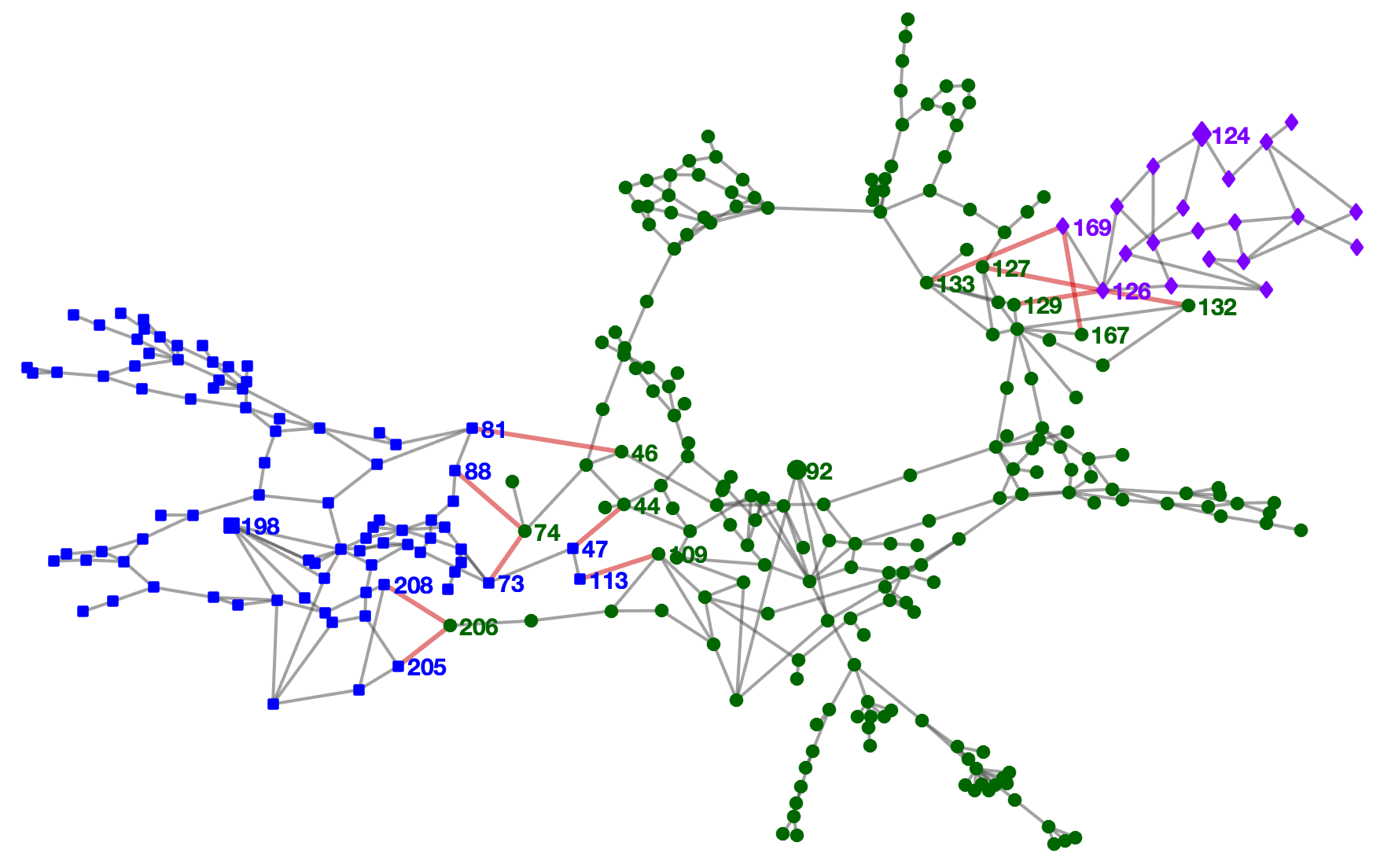}\label{fig:f4}}
  \hfill
     \subfloat[ActivSg 500-bus, $m=3$ using MILP.]{\includegraphics[width=0.495\textwidth]{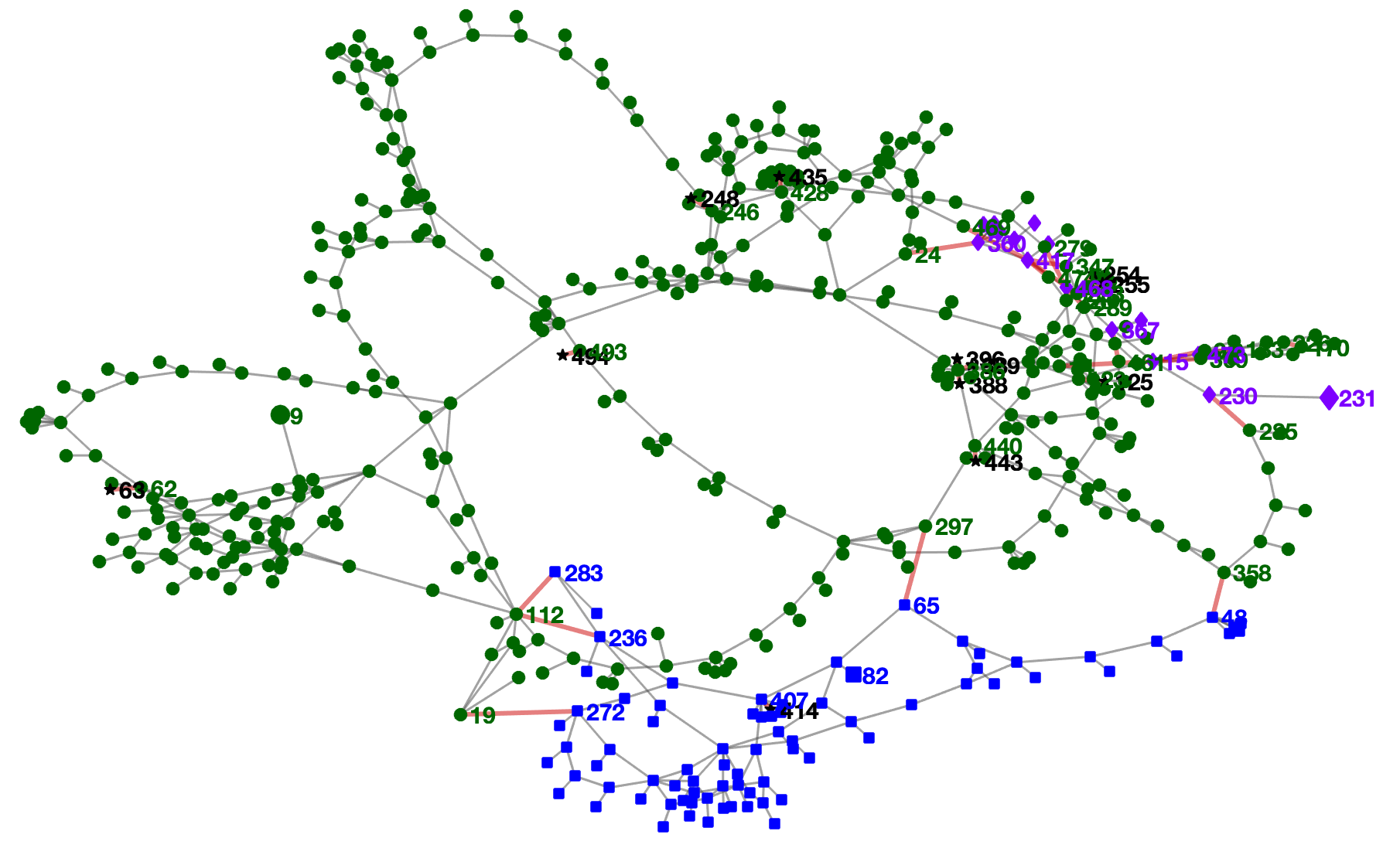}\label{fig:f5}}
  \hfill
  \subfloat[ActivSg 500-bus, $m=3$ using Algorithm~1.]{\includegraphics[width=0.495\textwidth]{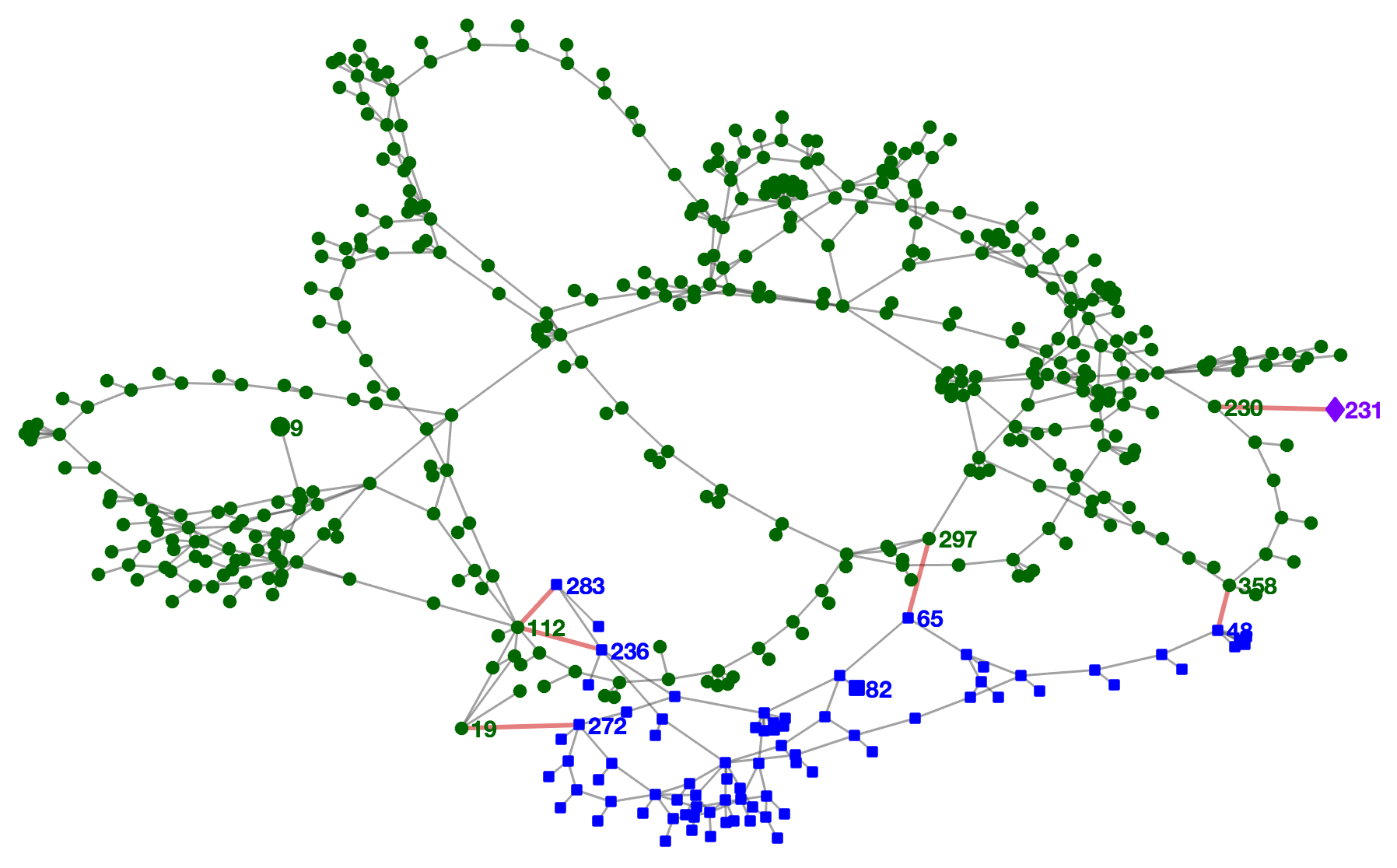}\label{fig:f6}}
  \caption{Comparison of islanding strategies given by MILP-based approach   and Algorithm~1 for IEEE 300-bus and ActivSg 500-bus test cases with $m=3$ islands (in blue, green, and purple colors). Each node in the graph represents a bus of power system. The edges correspond to the set of transmission lines. The set of transmission lines tripped in each case is marked in red, whereas the those remain in the power system are in grey color. Isolated buses are marked in black color.}\label{fig:islands}
\end{figure*}

\subsection{Simulation Results}

In our case study, we choose the load shedding cost function $c_j(x)$ as $c_j(x)=\zeta_jx_j$, where $\zeta_j$ is randomly generated following a uniform distribution within range $(1,100)$.
Trade-off parameters $\alpha_1,\ldots,\alpha_6$ are chosen to be $1$.
We choose parameter $\epsilon=0$.
We compare our proposed approach in Algorithm \ref{algo:islanding} with the baseline approach in two settings for the evaluation purpose, where the first setting partitions the test cases into $m=2$ islands, and the second setting generates $m=3$ islands. 

\begin{table*}[ht]
\caption{This table presents the set of reference generators given to Algorithm~1 and the number of transmission lines tripped by islanding strategies obtained using MILP-G (MILP using Gurobi), MILP-M (MILP using intlinprog), and Algorithm~1 (hybrid submodular approach). The islanding strategy given by Algorithm~1 trips fewer transmission lines in all test cases including IEEE 118-bus, IEEE 300-bus, ActivSg 500-bus, and Polish 2383-bus systems with different settings, and thus the proposed approach is more practical for implementation.}\label{table:edges-removed}
\centering
\scalebox{1}{
\begin{tabular}{|c|c|c|ccc|}
\hline
\multirow{2}{*}{\textbf{Test Case}}      & \multirow{2}{*}{\textbf{\begin{tabular}[c]{@{}c@{}}Number \\ of Islands\end{tabular}}} & \multicolumn{1}{c|}{\multirow{2}{*}{\textbf{\begin{tabular}[c]{@{}c@{}}Ref. Generators\\ (for Algorithm 1)\end{tabular}}}} 
& \multicolumn{3}{c|}{\textbf{Number of Transmission Lines Tripped}}                       \\ \cline{4-6} 
                                          &         &                                                                            & \multicolumn{1}{c|}
                                          {~~~~~~~~\textbf{MILP-G}~~~~~~~~} &
                                          \multicolumn{1}{c|}{~~~~~~~~\textbf{MILP-M}~~~~~~~~} & \textbf{Algorithm 1} \\ \hline
\multirow{2}{*}{\textbf{IEEE 118-bus}}    & 2       & 12, 100                                                                               & \multicolumn{1}{c|} {9} & \multicolumn{1}{c|} {9}    & {$\mathbf{5}$}             \\ \cline{2-6} 
                                          & 3        & 26, 65, 80                                                                              & \multicolumn{1}{c|}{14} & \multicolumn{1}{c|}{14}  & {$\mathbf{8}$}          \\ \hline
\multirow{2}{*}{\textbf{IEEE 300-bus}}    & 2      & 91, 198                                                                                &\multicolumn{1}{c|}{14} & \multicolumn{1}{c|}{5}   & {$\mathbf{4}$ }        \\ \cline{2-6} 
                                          & 3      & 92, 124, 198                                                                                & \multicolumn{1}{c|}{18} & \multicolumn{1}{c|}{20}   & {$\mathbf{12}$}             \\ \hline
\multirow{2}{*}{\textbf{ActivSg 500-bus}}    & 2      & 9, 16                                                                                & \multicolumn{1}{c|}{34} & \multicolumn{1}{c|}{42}   & $\mathbf{11}$               \\ \cline{2-6} 
                                          & 3       & 9, 82, 231                                                                               & \multicolumn{1}{c|}{45} & \multicolumn{1}{c|}{32}   & {$\mathbf{6}$}              \\ \hline
\multirow{2}{*}{\textbf{Polish 2383-bus}} & 2      & 41, 1726                                                                                & \multicolumn{1}{c|}{NA} & \multicolumn{1}{c|}{NA}            & $\mathbf{3}$              \\ \cline{2-6} 
                                          & 3      & 45, 125, 1106                                                                          & \multicolumn{1}{c|}{NA} &\multicolumn{1}{c|}{NA}            &          $\mathbf{11}$                \\ \hline
\end{tabular}
}
\end{table*}

Table~\ref{table:edges-removed} presents the numbers of transmission lines removed by the baseline and our approach.
We observe that the islanding strategies given by Algorithm \ref{algo:islanding} trip fewer transmission lines compared to the baseline in all test cases, and hence are more practical to be implemented.
Fig. \ref{fig:islands} compares the islanding strategies given by MILP and Algorithm \ref{algo:islanding} for IEEE 300-bus and ActivSg 500-bus test cases when the desired number of islands is set to $m=3$.

We summarize the amount of load shedding, the values of objective function $\Bar{F}(E,\Lambda)$, and run time of MILP-M, MILP-G, and Algorithm \ref{algo:islanding} in Table \ref{table:results2island}.
We note that for Polish 2383-bus system, MILP-M and MILP-G run out of memory (``NA" entries in Table \ref{table:results2island}), and thus do not give an islanding solution.
We observe that our approach outperforms MILP-M and MILP-G in terms of the total cost $\Bar{F}(E,\Lambda)$ for all test cases.
The amount of load shedding required by Algorithm \ref{algo:islanding} is no larger than MILP-M or MILP-G for all test cases.
From the last column of Table \ref{table:results2island}, we note that the run time of the MILP-M grows fast as the scales of test cases increase, making the approach not applicable for Polish 2383-bus system. 
Although MILP-G takes the least run time for small-scale (IEEE 118-bus and IEEE 300-bus) and medium-scale (ActivSg 500-bus) test cases, it is not applicable to large-scale test case (Polish 2383-bus).
Our proposed approach computes a controlled islanding solution for large-scale test cases within a reasonable amount of time. 
We further remark that neither MILP-M nor MILP-G could optimize over generator coherency and thus involve fewer decision variables compared with our approach.

For the ActivSg 500-bus test case with $m=2$ islands, the run time of MILP-G and MILP-M are about $12$ times less than our approach. However, in this scenario the islanding strategies given by MILP-G and MILP-M trip about three to four times as many transmission lines compared to Algorithm \ref{algo:islanding} (see Table~\ref{table:edges-removed}). 
Furthermore, the islanding strategies given by MILP-G and MILP-M disconnect some buses from the islands, whereas our approach ensures that each bus is connected within one island.
We note that ActivSg 500-bus system contains a collection of star subgraphs. 
To improve the run time of Algorithm \ref{algo:islanding}, we can view each star subgraph as one node, and implement Algorithm \ref{algo:islanding} on this reduced graph to approximate the islanding strategy.

Finally, we evaluate how the choices of $\alpha_1,\alpha_2,\alpha_3$ impact the controlled islanding strategy.
Our choices of $\alpha_1,\alpha_2,\alpha_3$ assign higher weight to the generator coherency.
The controlled islanding strategy did not change as we increase the values of $\alpha_2$ and $\alpha_3$.
However, by further increasing $\alpha_1$ to be $\alpha_1>3$, we observe that the controlled islanding strategy tripped more transmission lines to improve generator coherency.


\begin{table*}[ht]
\caption{This table summarizes the amount of load shedding, value of the objective function $\Bar{F}(E,\Lambda)$, and run time for IEEE 118-bus, IEEE 300-bus, ActivSg 500-bus, and Polish 2383-bus systems by using MILP-G (MILP using Gurobi), MILP-M (MILP using intlinprog), and Algorithm~1 (hybrid submodular approach) to generate $m\in\{2,3\}$ islands. ``NA" in the table represents that the baseline approaches run out of memory for Polish 2383-bus system.}\label{table:results2island}
\centering
\scalebox{0.82}{
\begin{tabular}{|c|c|ccc|ccc|ccc|}
\hline
\multirow{2}{*}{\textbf{Test Case}}      & \multirow{2}{*}{\textbf{\begin{tabular}[c]{@{}c@{}}Number \\ of Islands\end{tabular}}} & \multicolumn{3}{c|}{\textbf{Amount of Load Shedded}}      & \multicolumn{3}{c|}{\textbf{Objective Function Value}}             & \multicolumn{3}{c|}{\textbf{Run Time}}                    \\ \cline{3-11} 
                                          &                                                                                        & \multicolumn{1}{c|}{\textbf{MILP-G}} & \multicolumn{1}{c|}{\textbf{MILP-M}} & \textbf{Algorithm 1} & \multicolumn{1}{c|}{\textbf{MILP-G}} & \multicolumn{1}{c|}{\textbf{MILP-M}}         & \textbf{Algorithm 1}  &  \multicolumn{1}{c|}{\textbf{MILP-G}} & \multicolumn{1}{c|}{\textbf{MILP-M}} & \textbf{Algorithm 1} \\ \hline
\multirow{2}{*}{\textbf{IEEE 118-bus}}    & 2   & \multicolumn{1}{c|}{1691.00 MW}                                                                                    & \multicolumn{1}{c|}{1691.00 MW}    & {\bf 1526.00 MW}         & \multicolumn{1}{c|}{$4.9942\times 10^{8}$}   & \multicolumn{1}{c|}{$4.9942\times 10^{8}$} & $\mathbf{4.9941\times 10^{8}}$ & \multicolumn{1}{c|}{$\mathbf{2.35}$} & \multicolumn{1}{c|}{52.96 s}       & 101.47 s             \\ \cline{2-11} 
                                          & 3                                                    & \multicolumn{1}{c|}{1562.00 MW}                                   & \multicolumn{1}{c|}{1562.00 MW}    & {\bf 1497.00 MW}     & \multicolumn{1}{c|}{$2.3598\times 10^{7}$}       & \multicolumn{1}{c|}{$2.3598\times 10^{7}$} & $\mathbf{2.3587\times 10^{7}}$ & \multicolumn{1}{c|}{$\mathbf{2.86}$ s} & \multicolumn{1}{c|}{35.48 s}       & 154.99 s             \\ \hline
\multirow{2}{*}{\textbf{IEEE 300-bus}}    & 2                                                                                     & \multicolumn{1}{c|}{16410.95 MW}  & \multicolumn{1}{c|}{\bf 16267.45 MW}   & {\bf 16267.45 MW }     & \multicolumn{1}{c|}{$6.446\times 10^{5}$}     & \multicolumn{1}{c|}{$6.3456\times 10^{5}$} & $\mathbf{6.3431\times 10^{5}}$ & \multicolumn{1}{c|}{$\mathbf{7.94 s}$} & \multicolumn{1}{c|}{7205.2 s}      & $92.03$ s              \\ \cline{2-11} 
                                          & 3                                                                                   & \multicolumn{1}{c|}{16571.95 MW}    & \multicolumn{1}{c|}{16571.95 MW}   & {\bf 16267.45 MW}   & \multicolumn{1}{c|}{$7.7788\times 10^{5}$}        & \multicolumn{1}{c|}{$7.7847\times 10^{5}$} & $\mathbf{7.5603\times 10^{5}}$ & \multicolumn{1}{c|}{$\mathbf{35.28 s}$}  & \multicolumn{1}{c|}{7205.9 s}      & 366.32 s             \\ \hline
\multirow{2}{*}{\textbf{ActivSg 500-bus}}    & 2              & \multicolumn{1}{c|}{6277.30 MW}                                                                         & \multicolumn{1}{c|}{6277.30 MW}   & 6277.30 MW      & \multicolumn{1}{c|}{$3.1670\times 10^{8}$}     & \multicolumn{1}{c|}{$3.1670\times 10^{8}$} & $\mathbf{3.1665\times 10^{8}}$ & \multicolumn{1}{c|}{16.34 s}  & \multicolumn{1}{c|}{$\mathbf{10.71}$ s}      & 130.82 s              \\ \cline{2-11} 
                                          & 3      & \multicolumn{1}{c|}{6357.59 MW}                                                                                 & \multicolumn{1}{c|}{6357.59 MW}   & {\bf 6320.49 MW}      & \multicolumn{1}{c|}{$2.0354\times 10^{6}$}     & \multicolumn{1}{c|}{$2.0284\times 10^{6}$} & $\mathbf{2.0152\times 10^{6}}$ & \multicolumn{1}{c|}{$\mathbf{118.54}$ s} & \multicolumn{1}{c|}{7217.2 s}      & 208.78 s             \\ \hline
\multirow{2}{*}{\textbf{Polish 2383-bus}} & 2      & \multicolumn{1}{c|}{NA}                                                                                 & \multicolumn{1}{c|}{NA}            & \textbf{7511.15 MW}  & \multicolumn{1}{c|}{NA}          & \multicolumn{1}{c|}{NA}                    & $\mathbf{1.7234\times 10^{6}}$ & \multicolumn{1}{c|}{NA}  & \multicolumn{1}{c|}{NA}            & $\mathbf{335.80}$ s             \\ \cline{2-11} 
                                          & 3                & \multicolumn{1}{c|}{NA}                                                                       & \multicolumn{1}{c|}{NA}            &          \textbf{7579.65   MW}        & \multicolumn{1}{c|}{NA}   & \multicolumn{1}{c|}{NA}              &            $\mathbf{8.9825\times 10^{6}}$    & \multicolumn{1}{c|}{NA}              & \multicolumn{1}{c|}{NA}            &       $\mathbf{1328.41}$ s               \\ \hline
\end{tabular}
}
\end{table*}


\section{Conclusion}\label{sec:conclusion}

In this paper, we investigated the problem of computing a controlled islanding strategy for large-scale power systems.
We formulated the problem by taking power flow disruption, generator coherency, post-islanding stability, and blackstart generator allocation into consideration.
We translated the formulated controlled islanding problem into a matroid optimization program.
We presented the concept of hybrid submodularity, and proved that the metrics considered are hybrid supermodular.
Based on this insight, we developed an efficient local search algorithm with $\frac{1}{2}$-optimality guarantee.
We compared our solution approach with a baseline using mixed-integer linear program on four test cases including IEEE 118-bus, IEEE 300-bus, ActivSg 500-bus, and Polish 2383-bus systems.
The proposed solution approach found an islanding strategy for each test case that could outperform the baseline approach.
Moreover, our proposed approach scaled well to large cases such as Polish 2383-bus system, while the baseline did not return a result due to the problem size.

\bibliographystyle{IEEEtran}
\bibliography{mybib}

\appendix

In what follows, we introduce generator coherency and the procedure to compute the coherence matrix $\Bar{A}$.
Our discussion is based on \cite{chow1982time}.

We denote the rotor angle of generator $i$ at a steady state operating point as $\delta_i$.
Let $\Delta\delta_i$ be the rotor angle deviation of generator $i$ from the steady operating point.
Let $\Delta \delta=[\Delta\delta_1,\ldots,\Delta\delta_{|\mathcal{G}|}]^T$, where $\Delta\delta_i$ is the deviation of rotor angle of generator $i$ from the operating point.
We assume that the generators follow the linearized swing equation \cite{chow1982time} given as
\begin{equation*}
    \Delta \Ddot{\delta} = M^{-1}K\Delta \delta,
\end{equation*}
where $M$ is the inertia matrix, and $K$ is defined as
\begin{equation}
    K_{ij}=\begin{cases}
    -V_iV_jB_{ij}\cos(\delta_i-\delta_j),&\mbox{ if }i\neq j\\
    -\sum_{k\neq i}^n K_{ij},&\mbox{ if }i= j
    \end{cases}.
\end{equation}
Here $V_i$ is the per unit voltage behind transient reactance of generator $i$ and $B_{ij}$ is the imaginary part of the admittance.

Two generators $i$ and $j$ are said to be $\epsilon$-coherent if the maximum difference between their rotor angles is bounded by $\epsilon$ for all time.
Let $\sigma$ be the set of eigenvalues of $M^{-1}K$.
The $m$ number of eigenvalues with smallest magnitudes then represent the slowest modes of $M^{-1}K$.
Let $U$ be the eigenbasis corresponding to these $m$ eigenvalues.
Let $U_1$ be the matrix obtained by extracting the rows of $U$ that correspond to the reference generators. 
Then the coherence matrix $A$ is computed as $\Bar{A}=UU_1^{-1}$.

\begin{IEEEbiography}[{\includegraphics[width=1in,height=1.25in,clip,keepaspectratio]{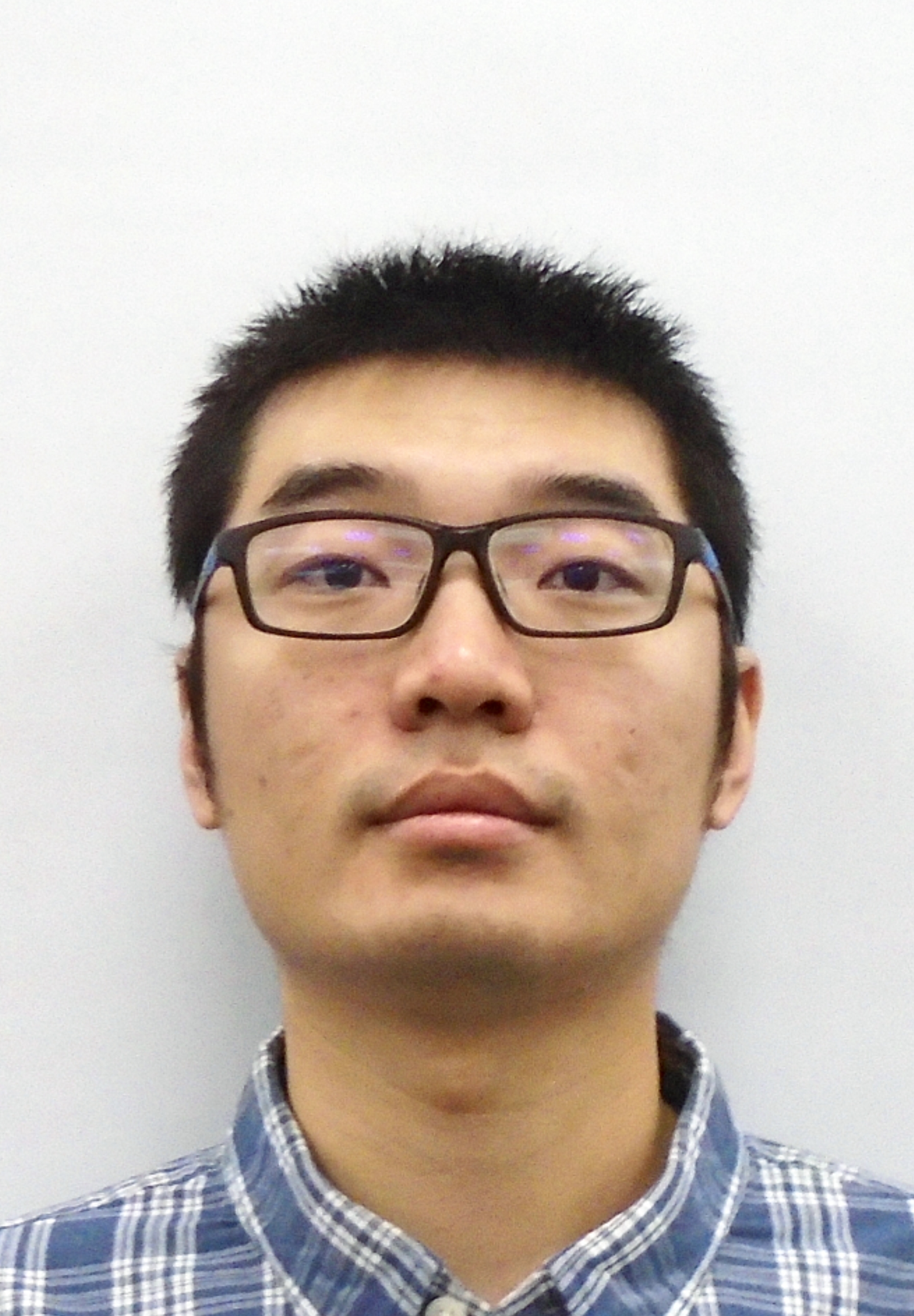}}]
{Luyao Niu}(M'22) is a postdoctoral scholar at the Network Security Lab (NSL), Department of Electrical and Computer Engineering, at the University of Washington - Seattle.
He received the B.Eng. degree from the School of Electro-Mechanical Engineering, Xidian University, Xi’an, China, in 2013. He received the M.Sc. degree and Ph.D. degree from the Department of Electrical and Computer Engineering, Worcester Polytechnic Institute (WPI) in 2015 and 2022. 
He is the author of the GameSec Outstanding Paper (2018) and was finalist for ACM/IEEE International Conference on Cyber-Physical Systems (ICCPS) 2020 Best Paper Session Award. 
His research interests include optimization, game theory, and scalable and verifiable control and security of cyber physical systems.
\end{IEEEbiography}
\vfill

\begin{IEEEbiography}[{\includegraphics[width=1in,height=1.25in,clip,keepaspectratio]{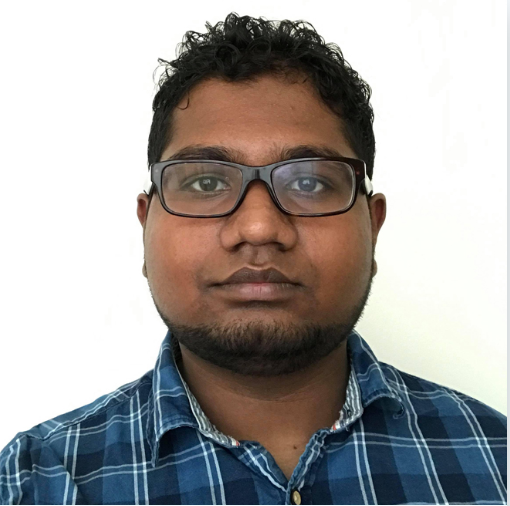}}]{Dinuka Sahabandu}(M'23) is a postdoctoral scholar  at the Network Security Lab (NSL), Department of Electrical and Computer Engineering, at the University of Washington - Seattle.
He received the B.S. degree and M.S. degree in Electrical Engineering from the
Washington State University - Pullman in 2013
and 2016, respectively. 
He received the Ph.D. degree from the Department of Electrical and Computer Engineering at the University of Washington - Seattle in 2023.
His research interests include game theory for network security and control of multi-agent systems.
\end{IEEEbiography}

\vfill
\begin{IEEEbiography}[{\includegraphics[width=1in,height=1.25in,clip,keepaspectratio]{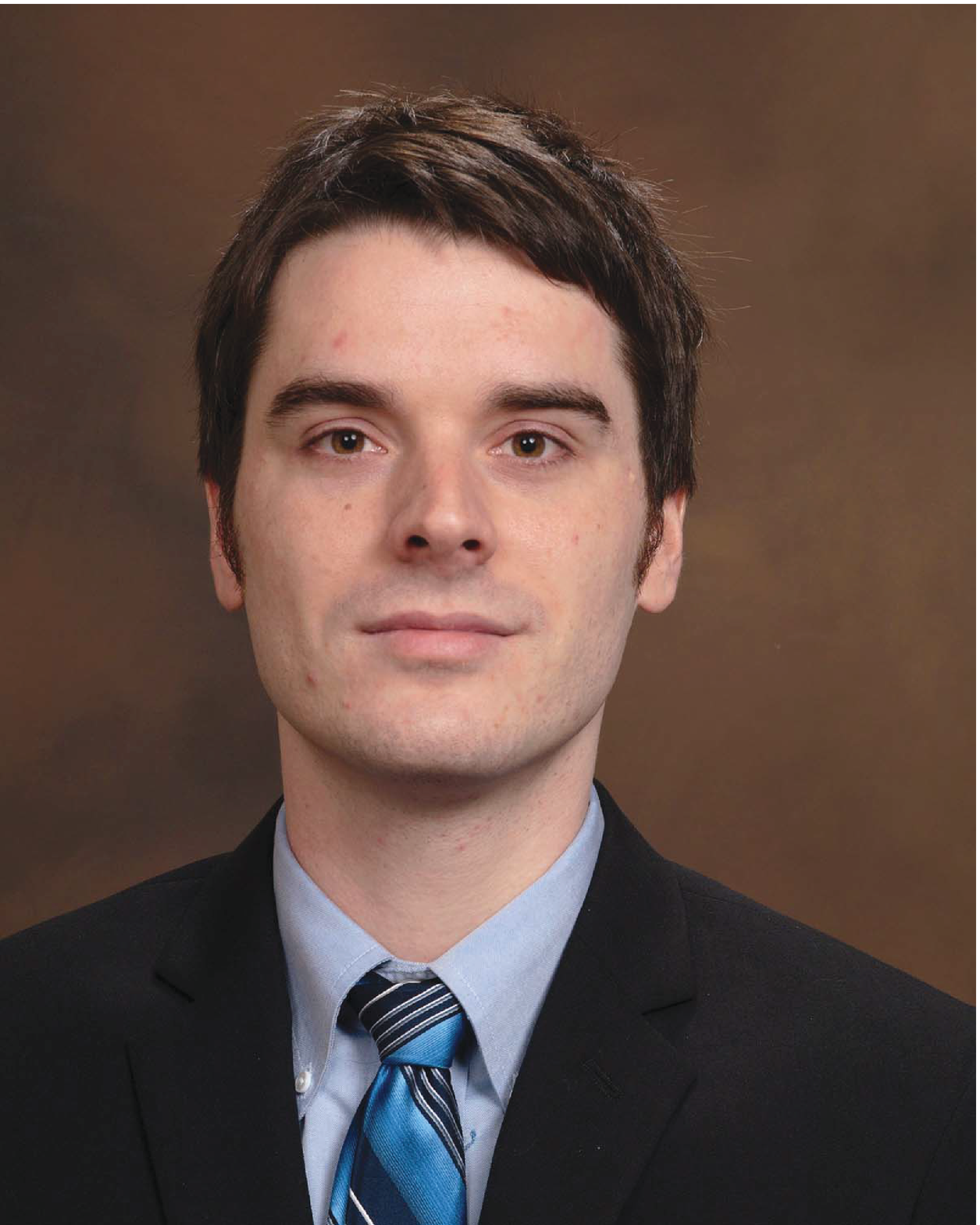}}]{Andrew Clark}(M'15)
is an Associate Professor of Electrical and Systems Engineering at Washington University in St. Louis. He received the B.S. degree in Electrical Engineering and the M.S. degree in Mathematics from the University of Michigan - Ann Arbor in 2007 and 2008, respectively. He received the Ph.D. degree from the Network Security Lab (NSL), Department of Electrical Engineering, at the University of Washington - Seattle in 2014. He is author or co-author of the IEEE/IFIP William C. Carter award- winning paper (2010), the WiOpt Best Paper (2012),
and the WiOpt Student Best Paper (2014), and was a finalist for the IEEE CDC 2012 Best Student-Paper Award. 
He received the GameSec Outstanding Paper Award (2018) and was finalist for the ACM ICCPS Best Paper Award (2016, 2018, 2020).
He won the General Motors AutoDriving Security Award at VehicleSec (2023).
He received the University of Washington Center for Information Assurance and Cybersecurity (CIAC) Distinguished Research Award (2012), Distinguished Dissertation Award (2014), an NSF CAREER award (2020), and an AFOSR YIP award (2022). His research interests include control and security of complex networks, submodular optimization, and control-theoretic modeling of network security threats.
\end{IEEEbiography}
 
\vspace{11pt}


\vfill

\begin{IEEEbiography}[{\includegraphics[width=1in,height=1.25in,clip,keepaspectratio]{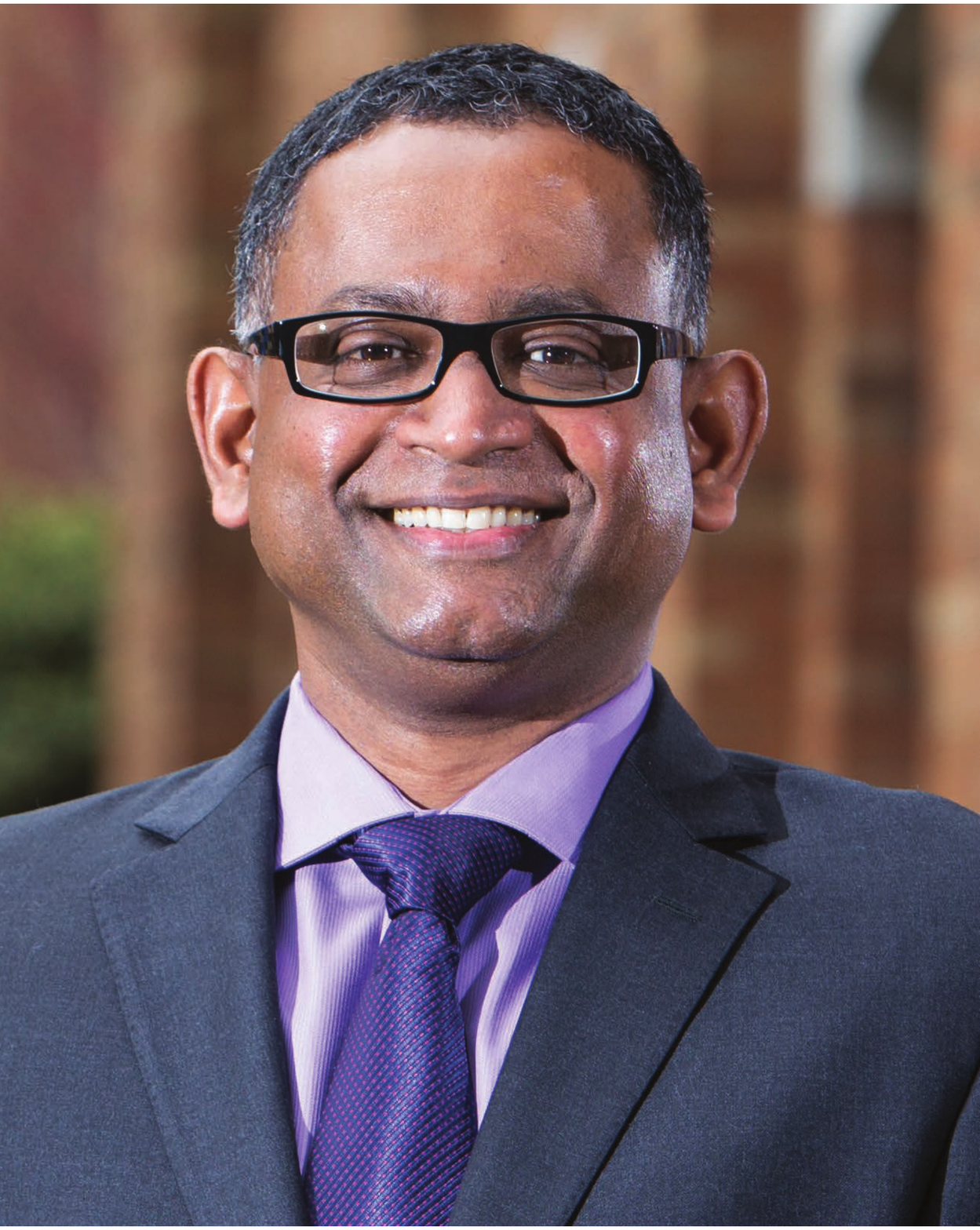}}]{Radha Poovendran} (F’15) is a Professor in
the Department of Electrical and Computer
Engineering at the University of Washington
(UW) - Seattle. He served as the Chair
of the Electrical and Computer Engineering
Department at UW for five years starting
January 2015. He is the Director of the Network
Security Lab (NSL) at UW. He is the Associate
Director of Research of the UW Center for
Excellence in Information Assurance Research
and Education. He received the B.S. degree in
Electrical Engineering and the M.S. degree in Electrical and Computer
Engineering from the Indian Institute of Technology- Bombay and
University of Michigan - Ann Arbor in 1988 and 1992, respectively.
He received the Ph.D. degree in Electrical and Computer Engineering
from the University of Maryland - College Park in 1999. His research
interests are in the areas of wireless and sensor network security,
control and security of cyber-physical systems, adversarial modeling,
smart connected communities, control-security, games-security, and
information theoretic security in the context of wireless mobile networks.
He is a Fellow of the IEEE for his contributions to security in cyberphysical
systems. He is a recipient of the NSA LUCITE Rising Star
Award (1999), National Science Foundation CAREER (2001), ARO
YIP (2002), ONR YIP (2004), and PECASE (2005) for his research
contributions to multi-user wireless security. He is also a recipient of
the Outstanding Teaching Award and Outstanding Research Advisor
Award from UW EE (2002), Graduate Mentor Award from Office of
the Chancellor at University of California - San Diego (2006), and the
University of Maryland ECE Distinguished Alumni Award (2016). He
was co-author of award-winning papers including IEEE/IFIP William C.
Carter Award Paper (2010) and WiOpt Best Paper Award (2012).
\end{IEEEbiography}






\end{document}